\newtheorem{theorem}{Theorem}[section]
\newtheorem{lem}[theorem]{Lemma}
\newtheorem{thm}[theorem]{Theorem}
\theoremstyle{definition}
\newtheorem{defn}[theorem]{Definition}
\newtheorem{ex}[theorem]{Example}
\theoremstyle{remark}
\numberwithin{equation}{section}
\newcommand{\R}{{\mathbb R}}
\begin{document}
\setcounter{page}{1}

\title[Alternating iterations]
{Proper Weak Regular Splitting and
  its Application to Convergence of Alternating Iterations}

\author[Debasisha Mishra]{Debasisha Mishra$^*$}

\address{$^{*}$ Department of Mathematics, National Institute of Technology Raipur\\
                         Raipur - 492 010, India.}
\email{\textcolor[rgb]{0.00,0.00,0.84}{kapamath@gmail.com}}

%\dedicatory{This paper is dedicated to Professor Miroslav Fiedler}

\subjclass[2010]{Primary 15A09.}

\keywords{Linear systems;  Iterative methods; Moore-Penrose inverse;
Non-negativity; Proper splitting; Convergence theorem;
  Comparison theorem.}

\date{Received: xxxxxx; Revised: yyyyyy; Accepted: zzzzzz.
\newline \indent $^{*}$ Corresponding author}

\begin{abstract}

 Theory of matrix splittings is a useful tool for finding solution
of rectangular linear system of  equations, iteratively. The purpose
of this paper is  two-fold. Firstly, we revisit theory
 of   weak regular splittings for rectangular matrices. Secondly,  we propose
   an alternating iterative method for solving rectangular linear
    systems  by using  the Moore-Penrose inverse and discuss its convergence theory,
     by extending the work of
  Benzi and Szyld [Numererische Mathematik 76 (1997) 309-321;
  MR1452511]. Furthermore,  a comparison result is obtained which insures faster convergence rate
  of the proposed alternating iterative scheme.
\end{abstract} \maketitle

\section{Introduction}
Many questions in science and engineering give rise to linear
discrete ill-posed problems.  In particular, the discretization of
Fredholm integral equations of the first kind, and in particular
deconvolution problems with a smooth kernel, lead to linear systems
of equations of the form
\begin{eqnarray}\label{eq0}
Ax=b, ~~~~A\in {\R}^{m\times n}, ~~~~~x\in {\R}^n, ~~b\in {\R}^m,
\end{eqnarray}
 with a matrix of ill-determined rank, where ${\R}^{m\times n}$ denotes the set of all real rectangular matrices.
  Linear systems of equations
with a matrix of this kind are commonly referred to as linear
discrete ill-posed problems.  We consider equation (\ref{eq0}) as a
least-square problem in case  of a inconsistent system. Similarly,
singular linear  systems of equations arise in many problems like
finite difference representation of Neumann problems, finite element
electromagnetic analysis using edge elements and computation of
stationary probability vectors of stochastic matrices in the
analysis of Markov chains, to name a few. In particular, we arrive
at an $M$-matrix\footnote{A matrix $A\in {\R}^{n\times n}$ is said
to be an  {\it $M$-matrix} if $A = sI - B$, where $B\geq 0$ and
$s\geq \rho(B)$. It becomes a singular $M$-matrix when $s=\rho(B)$.}
as co-efficient matrix $A$.
 The study of $M$-matrices has a long history.   A systematic study of
$M$-matrices was first initiated by Fiedler and Ptak \cite{tak}.
Fifty equivalent conditions of an $M$-matrix are reported in the
book by Berman and Plemmons \cite{bpbook}. An extensive theory of
$M$-matrix has been developed for its role in iterative methods. In
both theoretical and practical case, iterative methods play a vital
role for solving large sparse linear system of equations as either
solvers or preconditioners. In this note, we propose an alternating
iterative method using theory of proper splittings.

For $A\in {\R}^{m\times n}$, a {\it splitting} is an expression of
the form
 $A = U-V$, where $U$ and $V$ are matrices of the same order as in $A$. The concept of
 splitting first arises from the iterative solution of large linear system of
equations. Standard iterative methods like the Jacobi, Gauss-Seidel
and successive over-relaxation methods arise from different choices
of $U$ and $V$.  Berman and Plemmons  \cite{bpcones} proposed first
an iterative method for solving (\ref{eq0}). They used the
Moore-Penrose inverse for computing least square solutions in the
inconsistent case.  The matrix $G$ satisfying the four matrix
equations: $AGA=A,~GAG=G,~(AG)^{T}=AG$ and $(GA)^{T}=GA$ is called
the {\it Moore-Penrose inverse} of $A$ (here $B^T $ denotes the
transpose of $B$). It always exists and unique, and is denoted by
$A^{\dag}$. The advantage of iterative technique for solving
rectangular system of linear equations is that it avoids use of the
normal system $A^TAx = A^Tb$ where $A^TA$ is frequently
ill-conditioned and influenced greatly by roundoff errors (see
\cite{f}).

Berman and Plemmons \cite{bpcones} introduced the notion of proper
splitting for rectangular matrices, which we recall next. A
splitting $A=U-V$ of $A\in {\R}^{m\times n}$ is called a {\it proper
splitting} if $R(U)=R(A)$ and $N(U)=N(A)$, where the text $R(A)$ and
$N(A)$ denote the range and null-space of a matrix $A$,
respectively.
 The authors of \cite{bpcones}  considered  the following iteration
 scheme:
\begin{eqnarray}\label{eq1}
x^{i+1}=Hx^{i}+c,
\end{eqnarray}
where   $A=U-V$ is a proper splitting, $H=U^{\dag}V \in
{\R}^{n\times n} $ is called the {\it iteration matrix}  and
$c=U^{\dag}b$ to solve  (\ref{eq0}), iteratively. The same authors
proved that
 the iteration scheme (\ref{eq1})
converges
 to $A^{\dag}b$, the least square solution of minimum norm
 for any initial vector $x^{0}$ if and only if
 the spectral radius of $H$ is less than 1
  (see Corollary 1, \cite{bpcones}).

The authors of \cite{bpcones} also obtained several convergence
criteria for different subclasses of proper splitting. Recently,
Jena {\it et al.} \cite{miscal} revisited the same theory. Certain
necessary parts of the same theory are recalled and discussed in
Section 3 of this paper. The above discussion extends convergence
theory of the iterative scheme:
\begin{eqnarray}\label{eq1non}
x^{i+1}=U^{-1}Vx^{i}+U^{-1}b,
\end{eqnarray}
which is being used to solve  square nonsingular linear system
$Ax=b$.

On the other hand, the speed of the iteration schemes (\ref{eq1})
 and (\ref{eq1non}) is a subject of concern.
In this direction, several works have been done in literature. Among
these works, Benzi and Szyld \cite{benz} proposed the concept of
alternating iteration method for solving square nonsingular and
square singular linear system of the form $Ax=b$, iteratively. They
considered two splittings of $A\in {\R}^{n\times n}$ such that
$A=M-N=P-Q$, and proposed the scheme
\begin{eqnarray}\label{AIS0} x^{i+1/2}=M^{-1}Nx^{i}+M^{-1}b,
~~~~x^{i+1}=P^{-1}Qx^{i+1/2}+P^{-1}b, ~i=0,1,2,\cdots.
\end{eqnarray}
Then, eliminating $x^{i+1/2}$, they obtained
\begin{eqnarray}\label{AIS}
~~~~x^{i+1}=P^{-1}QM^{-1}N x^{i}+P^{-1}(QM^{-1}+I)b,
~~~i=0,1,2,\cdots.
\end{eqnarray}
Finally, they discussed   convergence theory of the above scheme
using weak regular splitting of $A$ among other results. (Recall
that a splitting $A=U-V$ of $A\in {\R}^{n\times n}$ is weak regular
\cite{var} if $U^{-1}$ exists,  $U^{-1}\geq 0$ and $U^{-1}V\geq 0$.)
 {\it The objective of the present
paper is to introduce alternating iteration technique and to develop
its  convergence theory for solving rectangular
 linear system of equations.} By doing this, we will have another iteration scheme  of the
 form (\ref{eq3})
 which converges faster than  the iteration scheme (\ref{eq1}).

 To fulfil
this objective, we organize the content of the paper as follows. In
Section 2, we set up our notation and terminology. Furthermore,
  we collect some useful facts on projection, the Moore-Penrose inverse, proper splittings,
 spectral radius and its connection with
  non-negative matrices which will be used in deriving the main results in Section 3 and Section 4.
 The next Section  recalls results on theory of regular and weak
regular splittings for rectangular matrices. It also contains two
comparison results which will help us in detecting a better
splitting between  matrix splittings.
 The main contribution of this paper discussed in   Section 4 is that we
introduce the notion of alternating iterative scheme for rectangular
matrices by using the Moore-Penrose inverse. Then convergence and
comparison results involving this scheme are reported. Finally, we
end up with a concluding Section which compares our work with  Benzi
and Szyld's work.

\section{Prerequisites}
This section contains our notation and definitions, and also we
recall some useful facts related to Perron-Frobenous theory for
non-negative matrices. Throughout the paper, all our matrices are
real.
 Let  $L$ and $M$ be   complementary subspaces of ${\R}^{n}$,
i.e., $L\oplus M={\R}^{n}$. Let also  $P_{L,M}$  be a  projector on
$L$ along $M$. Then $P_{L,M}A=A$ if and only if $R(A)\subseteq L$
and $AP_{L,M}=A$ if and only if $N(A)\supseteq M$. If $L\perp M$,
then $P_{L,M}$  will be denoted by $P_{L}$.
 The {\it spectral radius} of  $A\in {\R}^{n\times n}$, denoted by $\rho(A)$ is  defined by
$\rho(A)=\displaystyle \max_{1\leq i\leq n}|\lambda_i|$, where
$\lambda_1,\lambda_2,\cdots,\lambda_n$  are the eigenvalues of $A$.
It is known that $\rho(AB)=\rho(BA)$, where $A$  and $B$ are two
matrices such that $AB$ and $BA$ are defined.   We now recall some
facts on generalized inverses, non-negative matrices and proper
splittings below.

\subsection{Generalized inverses} These are generalizations of the
ordinary matrix inverse. Generalized inverses exist for all matrices
while the ordinary matrix inverse does not exist. Some of the
important generalized inverses are the Moore-Penrose inverse, the
group inverse and the Drazin inverse. While the definition of the
first one is introduced in  page 2, the other two are presented
next.
    The {\it Drazin inverse} of a matrix
$A\in {\R}^{n\times n}$
 is the unique solution $X\in {\R}^{n\times n}$ satisfying  the equations:
$A^k=A^kXA$, $X=XAX$ and $AX=XA$, where $k$ is the
index\footnote{The {\it index} of $A\in {\R}^{n\times n}$ is the
least non-negative integer $k$ such that
rank($A^{k+1}$)=rank($A^{k}$).} of $A$. It is denoted by $A^{D}$.
But for $k=1$, $A^{D}$ is called as {\it group inverse} of $A$, and
is denoted by $A^{\#}$. While $A^{\dag}$ and $A^{D}$ exist for any
matrix $A$, $A^{\#}$ does not. It exists only for matrices of index
1. We refer to \cite{ben} for more details. In case of nonsingular
matrix $A$, $A^{\dag}=A^{-1}=A^D=A^{\#}$. Some of the well-known
properties of $A^{\dag}$   which will be
 frequently used in this paper are: $R(A^{T})=R(A^{\dag})$;
$N(A^{T})=N(A^{\dag})$;
 $AA^{\dag}=P_{R(A)};~A^{\dag}A=P_{R(A^{T})}$.
  In particular, if $x\in R(A^{T})$ then $x=A^{\dag}Ax$.

\subsection{Non-negative matrices}
 $A=(a_{ij}) \in {\R}^{m\times n}$ is called {\it non-negative} if $A\geq 0$,
 where $A\geq 0$  means   $a_{ij} \geq 0$ for each $i, j$, and there exists
at least one pair of indices  $k, l$ for which $a_{k,l}> 0.$ For
$A,B\in {\R}^{m\times n}$, $A\leq B$ means $B-A\geq 0$.  Similarly,
$B>0$ means all the entries of $B$ are positive. The same notation
and nomenclature are also used for vectors. A matrix
 $A \in {\R}^{m\times n}$ is called {\it semi-monotone} if $A^{\dag}\geq
 0$.   Next four results deal with non-negativity and  spectral
 radius,
and are going to be used in  Section 3 and Section 4.

\begin{thm}\label{frob}(Theorem 2.20, \cite{var})\\
Let  $B\in {\R}^{n\times n}$ and  $B\geq 0$. Then \\
(i) $B$ has a non-negative real eigenvalue equal to its spectral radius.\\
(ii) There exists a non-negative eigenvector for its spectral
radius.
\end{thm}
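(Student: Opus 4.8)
The plan is to reduce the non-negative case to the strictly positive case, where a fixed-point argument produces an eigenpair, and then recover the general statement by a perturbation-and-limit argument. First I would treat a strictly positive matrix $C>0$. Consider the standard simplex $\Delta=\{x\in\R^{n}:x\geq 0,\ \sum_{i}x_{i}=1\}$ together with the continuous map $f(x)=Cx/\|Cx\|_{1}$, which is well defined on $\Delta$ because $Cx>0$ whenever $x\geq 0$ and $x\neq 0$. Since $f(\Delta)\subseteq\Delta$ and $\Delta$ is compact and convex, Brouwer's fixed-point theorem yields $x^{*}\in\Delta$ with $Cx^{*}=\lambda x^{*}$, where $\lambda=\|Cx^{*}\|_{1}>0$; moreover $x^{*}=\lambda^{-1}Cx^{*}>0$. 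To identify $\lambda$ with $\rho(C)$, I would apply the same fixed-point argument to $C^{T}$ to obtain a positive left eigenvector $w>0$, say $w^{T}C=s\,w^{T}$ with $s>0$. Pairing $w$ against $Cx^{*}=\lambda x^{*}$ gives $s\,w^{T}x^{*}=\lambda\,w^{T}x^{*}$, and since $w^{T}x^{*}>0$ we get $s=\lambda$. For an arbitrary eigenvalue $\mu$ with eigenvector $y$, the relations $\mu y_{i}=\sum_{j}C_{ij}y_{j}$ yield $|\mu|\,|y|\leq C|y|$ componentwise; multiplying on the left by $w^{T}$ gives $|\mu|\,w^{T}|y|\leq s\,w^{T}|y|$, whence $|\mu|\leq s=\lambda$. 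Thus $\lambda=\rho(C)$, proving both claims for positive matrices.

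Next I would pass to the general case $B\geq 0$ by perturbation. For $\epsilon>0$ set $B_{\epsilon}=B+\epsilon E$, where $E$ is the all-ones matrix, so that $B_{\epsilon}>0$. By the positive case, $\rho(B_{\epsilon})$ is an eigenvalue of $B_{\epsilon}$ with a positive eigenvector $x_{\epsilon}$, which I normalize so that $x_{\epsilon}\in\Delta$. Since $\Delta$ is compact and $\rho$ is a continuous function of the matrix entries, I can select a sequence $\epsilon_{k}\downarrow 0$ along which $x_{\epsilon_{k}}\to x\in\Delta$ and $\rho(B_{\epsilon_{k}})\to\rho(B)$. Passing to the limit in $B_{\epsilon_{k}}x_{\epsilon_{k}}=\rho(B_{\epsilon_{k}})x_{\epsilon_{k}}$ yields $Bx=\rho(B)x$ with $x\geq 0$ and $x\neq 0$. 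This establishes simultaneously part (i), that $\rho(B)$ is a non-negative real eigenvalue of $B$, and part (ii), that it admits a non-negative eigenvector.

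The main obstacle is the strictly positive case, and within it the identification $\lambda=\rho(C)$ rather than merely the existence of the eigenpair: existence follows cleanly from Brouwer's theorem, but showing that the eigenvalue so produced dominates the modulus of every other eigenvalue is what requires the componentwise inequality $|\mu|\,|y|\leq C|y|$ paired against a strictly positive left eigenvector. The limiting step is comparatively routine once one invokes compactness of $\Delta$ and continuity of $\rho$, the only subtlety being to confirm that the limit vector remains in $\Delta$, hence nonzero; this is guaranteed because $\Delta$ is closed.
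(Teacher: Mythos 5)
Your proof is correct. A point of comparison worth noting up front: the paper does not actually prove this statement at all --- it is quoted as Theorem 2.20 of Varga's \emph{Matrix Iterative Analysis} and used as a black box, so the only meaningful comparison is against the classical treatment in that source. There, the theorem is obtained by first proving the Perron--Frobenius theorem for \emph{irreducible} non-negative matrices via the Collatz--Wielandt variational quantity $\min_{i}\,(Ax)_{i}/x_{i}$, and then passing to general $B\geq 0$ by exactly the kind of perturbation-and-limit argument you use in your second paragraph. Your route differs in the first half: you handle the strictly positive case by Brouwer's fixed-point theorem on the simplex, and you identify the resulting eigenvalue with $\rho(C)$ by pairing the componentwise bound $|\mu|\,|y|\leq C|y|$ against a strictly positive left eigenvector of $C^{T}$ --- a clean and complete argument. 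What the Brouwer route buys is brevity and the avoidance of irreducibility machinery; what it costs is the appeal to a topological fixed-point theorem plus, in the limit step, continuity of $\rho$ in the matrix entries (standard, but not trivial). You could in fact dispense with that continuity appeal: by the monotonicity of the spectral radius on non-negative matrices ($A\geq B\geq 0$ implies $\rho(A)\geq\rho(B)$, Varga's Theorem 2.21, also quoted in the paper immediately after this statement), the numbers $\rho(B_{\epsilon_{k}})$ form a non-increasing sequence bounded below by $\rho(B)$, hence converge to some $\lambda^{*}\geq\rho(B)$; the limit equation $Bx=\lambda^{*}x$ with $x\in\Delta$, $x\neq 0$, shows $\lambda^{*}$ is an eigenvalue of $B$, so $\lambda^{*}\leq\rho(B)$, forcing $\lambda^{*}=\rho(B)$. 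With or without that refinement, your argument is a valid proof of both parts of the statement.
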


\begin{thm}\label{lspecageb}(Theorem 2.21, \cite{var})\\
Let $A, ~B\in {\R}^{n\times n}$  and $A\geq  B \geq 0.$ Then
$\rho(A)\geq \rho(B).$
\end{thm}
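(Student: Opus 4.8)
The plan is to derive the inequality $\rho(A)\ge\rho(B)$ from a single nonnegative eigenvector of the \emph{smaller} matrix $B$, rather than attempting to compare the spectra of $A$ and $B$ directly. By Theorem \ref{frob} applied to $B\ge 0$, there is a nonnegative eigenvector $x\ge 0$, $x\neq 0$, with $Bx=\rho(B)x$. Writing $\alpha:=\rho(B)\ge 0$ and using $A\ge B$ together with $x\ge 0$, the first observation is the componentwise inequality $Ax\ge Bx=\alpha x$. If $\alpha=0$ there is nothing to prove, since Theorem \ref{frob}(i) already guarantees $\rho(A)\ge 0$; so I would assume $\alpha>0$ from here on.

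The key step is to propagate this inequality through powers of $A$. Because $A\ge 0$ preserves componentwise inequalities, $A^{k}x\ge\alpha^{k}x$ holds for every $k$ by a short induction: assuming $A^{k}x\ge\alpha^{k}x$, multiplying by $A\ge 0$ gives $A^{k+1}x\ge\alpha^{k}Ax\ge\alpha^{k+1}x$. Dividing by $\alpha^{k}>0$ yields $(\alpha^{-1}A)^{k}x\ge x$ for all $k$.

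Now I would argue by contradiction. Suppose $\rho(A)<\alpha$. Then $\rho(\alpha^{-1}A)=\rho(A)/\alpha<1$, and it is standard (indeed, this is exactly the mechanism behind the convergence criterion for (\ref{eq1}) recalled in the Introduction) that a matrix of spectral radius less than $1$ has powers tending to the zero matrix. Hence $(\alpha^{-1}A)^{k}x\to 0$ as $k\to\infty$. Passing to the limit in $(\alpha^{-1}A)^{k}x\ge x$ gives $0\ge x$, which contradicts $x\ge 0$, $x\neq 0$. Therefore $\rho(A)\ge\alpha=\rho(B)$.

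I expect the delicate point to lie in the choice of argument rather than in any single computation. The most tempting route, pairing a right eigenvector $x$ of $B$ with a left nonnegative eigenvector $y\ge 0$ of $A$ to obtain $\rho(A)\,y^{T}x\ge\rho(B)\,y^{T}x$, stalls precisely when $y^{T}x=0$, i.e.\ when the supports of the two nonnegative eigenvectors are disjoint; circumventing that would force a perturbation-and-continuity argument (replacing $A$ by $A+\varepsilon J$ with $J>0$ and letting $\varepsilon\to 0$). The power-iteration contradiction above sidesteps this complication entirely, so the only things left to check carefully are the monotone induction $A^{k}x\ge\alpha^{k}x$ and the legitimacy of taking componentwise limits, both of which are routine.
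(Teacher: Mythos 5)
Your proof is correct, but note that the paper itself offers no proof to compare against: the statement is quoted directly from Varga's book (Theorem 2.21 of \cite{var}) and used as a known tool. Your argument---take the nonnegative eigenvector $x$ of $B$ from Theorem \ref{frob}, propagate $Ax\ge\rho(B)x$ through powers to get $A^{k}x\ge\rho(B)^{k}x$, and derive a contradiction from $\rho(A)<\rho(B)$ via $(\rho(B)^{-1}A)^{k}\to 0$---is sound; the monotone induction, the reduction to the case $\rho(B)>0$, and the passage to the limit in the componentwise inequality are all handled correctly. The standard textbook route is different and shorter: $0\le B\le A$ gives $0\le B^{k}\le A^{k}$ for all $k$, hence $\|B^{k}\|_{\infty}\le\|A^{k}\|_{\infty}$, and Gelfand's formula $\rho(M)=\lim_{k\to\infty}\|M^{k}\|^{1/k}$ yields $\rho(B)\le\rho(A)$ with no eigenvector and no contradiction argument. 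What your route buys, in the context of this paper, is self-containedness: both ingredients you invoke are already recalled here, namely Theorem \ref{frob} for the eigenvector and Theorem \ref{neuman}, which you could cite explicitly in place of the generic ``powers tend to zero'' fact, since $\alpha^{-1}A\ge 0$ and $\rho(\alpha^{-1}A)<1$ make the series $\sum_{k}(\alpha^{-1}A)^{k}$ converge and hence force its terms to vanish. Your closing remark about why the naive pairing of a right eigenvector of $B$ with a left eigenvector of $A$ stalls when $y^{T}x=0$ is also accurate, and your argument genuinely avoids that pitfall rather than hiding it.
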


\begin{thm}\label{neuman}(Theorem 3.15, \cite{var})\\
Let $X\in {\R}^{n\times n}$  and $X\geq 0$. Then $\rho(X)<1$ if and
only if $(I-X)^{-1}$ exists and
$(I-X)^{-1}=\displaystyle\sum_{k=0}^{\infty}X^{k}\geq 0$.
\end{thm}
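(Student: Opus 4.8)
The plan is to establish the stated equivalence by proving the two implications separately, and to lean on the Perron--Frobenius material already recorded in Theorem~\ref{frob} for the less routine direction.

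For the forward implication, suppose $\rho(X)<1$. Then $1$ is not an eigenvalue of $X$, so $\det(I-X)\neq 0$ and $(I-X)^{-1}$ exists. I would then exploit the telescoping identity $(I-X)\sum_{k=0}^{N}X^{k}=I-X^{N+1}$, valid for every $N$. Because $\rho(X)<1$, the powers $X^{N+1}$ tend to the zero matrix as $N\to\infty$ (the standard fact that $\rho(X)<1$ forces $X^{N}\to 0$, which itself follows from the Jordan canonical form or from Gelfand's formula $\lim_{k}\|X^{k}\|^{1/k}=\rho(X)$). Multiplying the identity on the left by $(I-X)^{-1}$ gives $\sum_{k=0}^{N}X^{k}=(I-X)^{-1}(I-X^{N+1})$, and letting $N\to\infty$ yields $\sum_{k=0}^{\infty}X^{k}=(I-X)^{-1}$. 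Non-negativity is then immediate: each $X^{k}$ is a product of the non-negative matrix $X$ with itself and hence $X^{k}\geq 0$, so every partial sum is non-negative and so is the limit.

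For the converse, suppose $(I-X)^{-1}$ exists and equals the convergent series $\sum_{k=0}^{\infty}X^{k}\geq 0$; I must deduce $\rho(X)<1$. Here the mere invertibility of $I-X$ is not enough, since a matrix with $\rho(X)>1$ can still have $I-X$ nonsingular, so the argument must genuinely use both the convergence of the series and the hypothesis $X\geq 0$. By Theorem~\ref{frob}, since $X\geq 0$ there is a non-negative eigenvector $u\neq 0$ with $Xu=\rho(X)u$, whence $X^{k}u=\rho(X)^{k}u$ for every $k$. Applying the convergent matrix series to $u$ gives $\big(\sum_{k=0}^{\infty}X^{k}\big)u=\big(\sum_{k=0}^{\infty}\rho(X)^{k}\big)u$; since $u$ is non-negative and nonzero it has a strictly positive entry, so the scalar geometric series $\sum_{k=0}^{\infty}\rho(X)^{k}$ must converge, which is possible only if $\rho(X)<1$.

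I expect the converse to be the main obstacle, precisely because it is where one cannot argue by the linear algebra of the inverse alone: the crucial step is extracting the Perron eigenvector from Theorem~\ref{frob} in order to collapse the matrix series into a scalar geometric series whose convergence pins down $\rho(X)$. The forward direction is essentially bookkeeping once the power-convergence fact $X^{N}\to 0$ is in hand.
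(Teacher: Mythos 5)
Your proof is correct. Note that the paper itself offers no proof of this statement: it is imported verbatim as Theorem 3.15 of Varga's \emph{Matrix Iterative Analysis} and used purely as a prerequisite, so there is no in-paper argument to compare against; your argument is the classical one for this result. Both halves are sound: the forward direction via the telescoping identity $(I-X)\sum_{k=0}^{N}X^{k}=I-X^{N+1}$ together with $X^{N}\to 0$ when $\rho(X)<1$, and the converse via the Perron eigenvector guaranteed by Theorem \ref{frob}, which collapses the matrix series to the scalar geometric series $\sum_{k}\rho(X)^{k}$ whose convergence forces $\rho(X)<1$. You also correctly identify why the converse cannot follow from invertibility of $I-X$ alone. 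One streamlining worth knowing: in the converse you do not even need the series, only the hypothesis $(I-X)^{-1}\geq 0$. Indeed, with $Xu=\rho(X)u$, $u\geq 0$, $u\neq 0$, invertibility of $I-X$ gives $\rho(X)\neq 1$, and from $(I-X)u=(1-\rho(X))u$ one gets $(I-X)^{-1}u=(1-\rho(X))^{-1}u$; since the left-hand side is entrywise nonnegative and $u$ has a strictly positive entry, $(1-\rho(X))^{-1}>0$, i.e. $\rho(X)<1$. This sidesteps the (harmless, but slightly fussy) step of justifying termwise application of the convergent matrix series to the vector $u$.
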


\begin{thm} \label{use} (Theorem 1.11, \cite{bpbook}, Chapter 2)\\
Let $B\in {\R}^{n\times n}$, $B\geq  0$ and  $x> 0$  be such that
$Bx- \alpha x \leq 0$. Then $\rho(B) \leq  \alpha $.
\end{thm}

\subsection{Proper splittings}
 Here, we recall some  results on  proper splittings
   which are useful in proving our main results.
  The first one contains a
  few properties of a proper splitting.

\begin{thm}\label{proper} (Theorem 1, \cite{bpcones})\\
Let  $A=U-V$ be a proper splitting of $A\in {\R}^{m\times n}$. Then \\
(a) $A=U(I-U^{\dag}V)$;\\
(b) $I-U^{\dag}V$ is invertible;\\
(c) $A^{\dag}= (I-U^{\dag}V)^{-1}U^{\dag}.$
\end{thm}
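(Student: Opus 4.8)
The plan is to prove the three parts in order, since each one feeds the next. The key structural facts I will lean on are the two defining properties of a proper splitting, $R(U)=R(A)$ and $N(U)=N(A)$, together with the projection identities recalled in Section~2, namely $UU^{\dag}=P_{R(U)}$ and $U^{\dag}U=P_{R(U^T)}$, and the criterion $P_{L,M}A=A \iff R(A)\subseteq L$.

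For part (a), I would start from $A=U-V$ and write $U^{\dag}V = U^{\dag}(U-A) = U^{\dag}U - U^{\dag}A$, so that
\begin{eqnarray*}
U(I-U^{\dag}V) &=& U - UU^{\dag}V = U - UU^{\dag}(U-A).
\end{eqnarray*}
The task then reduces to showing $UU^{\dag}U = U$ (which is immediate from the Moore--Penrose equations) and $UU^{\dag}A = A$. The latter is where the proper-splitting hypothesis enters: $UU^{\dag}=P_{R(U)}=P_{R(A)}$ because $R(U)=R(A)$, and since $R(A)\subseteq R(A)$ the projection identity gives $P_{R(A)}A=A$. Substituting back collapses the expression to $A$.

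For part (b), I would argue that $I-U^{\dag}V$ is injective on $\R^n$, which for a square matrix suffices for invertibility. The main obstacle here is handling the interaction between the range/null-space conditions and the fact that $U^{\dag}V$ is generally only a map on $\R^n$, not something with full rank. I expect the cleanest route is to show that $N(I-U^{\dag}V)=\{0\}$ directly: if $(I-U^{\dag}V)x=0$ then from part (a), $Ax=U(I-U^{\dag}V)x=0$, so $x\in N(A)=N(U)$; combined with $x=U^{\dag}Vx=U^{\dag}(U-A)x=U^{\dag}Ux - U^{\dag}Ax$ and $N(U)=N(A)$, one forces $x\in N(U)\cap R(U^T)$. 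Since $U^{\dag}U=P_{R(U^T)}$ projects onto $R(U^T)=R(U^{\dag})$ and $N(U)\perp R(U^T)$, this intersection is trivial, so $x=0$.

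For part (c), having established invertibility in (b), I would verify that $X:=(I-U^{\dag}V)^{-1}U^{\dag}$ satisfies the four Penrose equations defining $A^{\dag}$, or more efficiently check the range and null-space conditions plus one algebraic identity. Using part (a) in the form $A=U(I-U^{\dag}V)$, I get $AX = U(I-U^{\dag}V)(I-U^{\dag}V)^{-1}U^{\dag}=UU^{\dag}=P_{R(A)}$, which is symmetric and acts as the identity on $R(A)$; similarly $XA=(I-U^{\dag}V)^{-1}U^{\dag}U(I-U^{\dag}V)$ should reduce to $P_{R(A^T)}$ after using $U^{\dag}U=P_{R(U^T)}=P_{R(A^T)}$ (the equality of ranges $R(U^T)=R(A^T)$ following from $R(U)=R(A)$, $N(U)=N(A)$). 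Verifying $AXA=A$ and $XAX=X$ is then routine, and the symmetry of $AX$ and $XA$ identifies $X$ as the unique Moore--Penrose inverse $A^{\dag}$. The delicate bookkeeping will be confirming $R(U^T)=R(A^T)$ and ensuring the projections land on exactly the subspaces needed; that verification is the analytic heart of the argument.
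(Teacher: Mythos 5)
The paper itself contains no proof of this theorem: it is imported verbatim from Berman and Plemmons (Theorem 1 of \cite{bpcones}), so there is no internal argument to compare yours against, and your proposal must stand on its own. Parts (a) and (b) do stand: the computation $U(I-U^{\dag}V)=U-UU^{\dag}(U-A)=UU^{\dag}A=A$ uses exactly the right facts ($UU^{\dag}=P_{R(U)}=P_{R(A)}$ and $P_{R(A)}A=A$), and your injectivity argument for (b) is clean and valid --- $x=U^{\dag}Vx$ places $x$ in $R(U^{\dag})=R(U^{T})$, while $Ax=U(I-U^{\dag}V)x=0$ places $x$ in $N(A)=N(U)=R(U^{T})^{\perp}$, forcing $x=0$.

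In part (c), however, one step is not justified as written. You assert that $XA=(I-U^{\dag}V)^{-1}U^{\dag}U(I-U^{\dag}V)$ ``should reduce to $P_{R(A^{T})}$ after using $U^{\dag}U=P_{R(U^{T})}=P_{R(A^{T})}$.'' That identity alone does not suffice: conjugating a projector by an invertible matrix does not in general return the same projector. What is actually needed is that $U^{\dag}U$ commutes with $U^{\dag}V$, equivalently $U^{\dag}V\,U^{\dag}U=U^{\dag}V$. This is where the proper-splitting hypothesis must enter a second time: since $V=U-A$ and $N(U)=N(A)$, every $x\in N(U)$ satisfies $Vx=Ux-Ax=0$, so $N(U)\subseteq N(V)$; by the criterion $AP_{L,M}=A$ if and only if $N(A)\supseteq M$ (recalled in Section 2 of the paper), this yields $VU^{\dag}U=VP_{R(U^{T})}=V$, hence the commutation, hence $XA=U^{\dag}U$, which is symmetric. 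With that fact in hand the remaining Penrose equations follow exactly as you indicate ($AXA=P_{R(A)}A=A$ and $XAX=XUU^{\dag}=X$, the latter needing no commutation), so the gap is genuine but easily closed. Note that it is a commutation identity, not the identification $R(U^{T})=R(A^{T})$ you flagged as the delicate point --- that identification is immediate from $N(U)=N(A)$ by taking orthogonal complements.
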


If $A=U-V$ is a proper splitting of $A\in {\R}^{m\times n}$, then
$U=A+V$ is also a proper splitting. Thus $I+A^{\dag}V$ is invertible
by Theorem \ref{proper} (b). Since $FG$ and $GF$ have same
eigenvalues for any $F$ and $G$ such that both the product are
defined, and $I+A^{\dag}V$ is invertible, so $-1$ is not an
eigenvalue of $VA^{\dag}$. Hence $I+VA^{\dag}$ is invertible. This
fact can also be proved by considering the proper splitting $U^T=
A^T+V^T$.

The next lemma shows a relation between the eigenvalues of
$U^{\dag}V$ and $A^{\dag}V$.

\begin{lem}\label{lemqeigval}(Lemma 2.6, \cite{misaml})\\
Let  $A=U-V$ be a  proper splitting of $A\in {\R}^{m\times n}$. Let
$\mu_i, ~1\leq i \leq s$ and $\lambda_j, ~1\leq j \leq s$ be the
 eigenvalues of the matrices $U^{\dag}V$ and $A^{\dag}V$,
respectively. Then for every $j$, we have $1 + \lambda_j\neq 0$.
Also, for every $i$, there exists $j$ such that
$\mu_i=\frac{\lambda_j}{1+\lambda_j}$ and for every $j$, there
exists $i$ such that $\lambda_j =\frac{\mu_i}{1-\mu_i}.$
\end{lem}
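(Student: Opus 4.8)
The plan is to establish the eigenvalue relationship by exploiting the factorization from Theorem~\ref{proper} together with the invertibility facts already assembled in the excerpt. First I would observe that by Theorem~\ref{proper}(c) we have $A^{\dag} = (I - U^{\dag}V)^{-1}U^{\dag}$, so that $A^{\dag}V = (I - U^{\dag}V)^{-1}U^{\dag}V$. Writing $H = U^{\dag}V$ and $K = A^{\dag}V$, this reads $K = (I-H)^{-1}H$, an identity between the two $n \times n$ matrices whose eigenvalues are precisely the $\mu_i$ and $\lambda_j$ in the statement. Since $H$ and $(I-H)^{-1}$ commute (both are polynomials/rational functions in $H$), they are simultaneously triangularizable, and one can read off the eigenvalue correspondence directly: if $\mu$ is an eigenvalue of $H$, then the corresponding eigenvalue of $K = (I-H)^{-1}H$ is $\tfrac{\mu}{1-\mu}$, provided $\mu \neq 1$.

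The nontrivial point, and the step I expect to be the main obstacle, is justifying that $1$ is never an eigenvalue of $H = U^{\dag}V$, for otherwise $\tfrac{\mu}{1-\mu}$ is undefined and the forward map breaks down. This is exactly where Theorem~\ref{proper}(b) enters: $I - U^{\dag}V$ is invertible, so $1 \notin \sigma(H)$, and the map $\mu \mapsto \tfrac{\mu}{1-\mu}$ is well-defined on every eigenvalue of $H$. Symmetrically, to obtain the inverse correspondence I must guarantee that $1 + \lambda_j \neq 0$ for every eigenvalue $\lambda_j$ of $A^{\dag}V$. This is precisely the remark preceding the lemma: since $U = A + V$ is also a proper splitting, Theorem~\ref{proper}(b) applied to it shows $I + A^{\dag}V$ is invertible, hence $-1 \notin \sigma(A^{\dag}V)$, which gives $1 + \lambda_j \neq 0$. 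This handles the first assertion of the lemma.

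For the two correspondences, I would argue as follows. From $K = (I-H)^{-1}H$ one solves algebraically for $H$ in terms of $K$: multiplying through gives $(I-H)K = H$, so $K = H + HK = H(I+K)$, whence $H = K(I+K)^{-1}$, using that $I + K = I + A^{\dag}V$ is invertible. Thus $H$ is a rational function of $K$ and vice versa, and the two matrices commute with the relevant resolvents, so they can be simultaneously put in upper-triangular form over $\mathbb{C}$. Reading the diagonal entries, each eigenvalue $\mu_i$ of $H$ pairs with an eigenvalue $\lambda_j$ of $K$ via $\mu_i = \tfrac{\lambda_j}{1+\lambda_j}$, and conversely each $\lambda_j$ pairs with some $\mu_i$ via $\lambda_j = \tfrac{\mu_i}{1 - \mu_i}$; one checks directly that these two scalar maps are mutual inverses, consistent with $H = K(I+K)^{-1}$ and $K = H(I-H)^{-1}$.

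Rather than invoking simultaneous triangularization, a cleaner route that avoids any subtlety about algebraic versus geometric multiplicities is to use the determinantal identity for characteristic polynomials. I would compute, for a scalar $t$ with $t \neq 1$,
\begin{equation*}
\det\!\left(K - \tfrac{t}{1-t}\,I\right) = \det\!\left((I-H)^{-1}H - \tfrac{t}{1-t}\,I\right) = \frac{\det(H - tI)}{(1-t)^{n}\det(I-H)},
\end{equation*}
so that $\tfrac{t}{1-t}$ is an eigenvalue of $K$ exactly when $t$ is an eigenvalue of $H$. This single identity simultaneously yields both correspondences and shows the eigenvalues are matched with multiplicities, so that indeed the index set $\{1,\dots,s\}$ can be used for both as in the statement. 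The only care needed is the excluded value $t=1$, which is ruled out by Theorem~\ref{proper}(b) as noted above, and the symmetric exclusion $1+\lambda_j \neq 0$ handled by the invertibility of $I + A^{\dag}V$.
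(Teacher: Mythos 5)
Your proof is correct. One thing to note at the outset: this paper does not actually prove the lemma --- it is quoted verbatim from Lemma 2.6 of the cited reference, and the only supporting material in the text is the remark preceding the statement, which establishes that $I+A^{\dag}V$ is invertible by applying Theorem~\ref{proper}(b) to the proper splitting $U=A+V$. You invoke exactly that remark for the assertion $1+\lambda_j\neq 0$, and your central identity $A^{\dag}V=(I-U^{\dag}V)^{-1}U^{\dag}V$, obtained from Theorem~\ref{proper}(c), is also the identity on which the cited source's argument rests. Where you differ is in how the eigenvalue correspondence is extracted. The most economical (and the original) finish is eigenvector-based: if $U^{\dag}Vx=\mu x$, then $(I-U^{\dag}V)x=(1-\mu)x$, so $(I-U^{\dag}V)^{-1}x=\tfrac{1}{1-\mu}x$ and hence $A^{\dag}Vx=\tfrac{\mu}{1-\mu}x$; symmetrically, if $A^{\dag}Vy=\lambda y$ then, writing $U^{\dag}V=A^{\dag}V\bigl(I+A^{\dag}V\bigr)^{-1}$ as you do, one gets $U^{\dag}Vy=\tfrac{\lambda}{1+\lambda}y$, the same vector serving both matrices. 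Your determinantal identity
\begin{equation*}
\det\!\left(A^{\dag}V - \tfrac{t}{1-t}\,I\right)=\frac{\det\!\left(U^{\dag}V - tI\right)}{(1-t)^{n}\det\!\left(I-U^{\dag}V\right)},\qquad t\neq 1,
\end{equation*}
accomplishes the same and buys something extra: it matches the eigenvalues with algebraic multiplicities, which the eigenvector argument does not address and which is what justifies indexing both spectra by the same set $\{1,\dots,s\}$. The only step worth making explicit in that route is that zero orders transfer across the substitution $\lambda=\tfrac{t}{1-t}$ because this M\"obius map is injective with nonvanishing derivative away from $t=1$, and $t=1$ is excluded by Theorem~\ref{proper}(b). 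Both of your variants (simultaneous triangularization and the determinant computation) are sound, so the proof stands as written.
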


\section{Proper Regular $\&$ Proper Weak Regular Splittings}

In this section,   the theory of proper regular and weak regular
splittings is recalled first, and then some new results are
proposed. We reproduce the definitions of  proper regular splitting
and proper weak regular splitting below.

\begin{defn}\label{r}(Definition 2, \cite{cli} $\&$ Definition 1.1, \cite{miscal})
 A splitting $A=U-V$ of $A\in {\R}^{m\times n}$ is called a {\it proper regular
 splitting}
 if it is a proper
splitting such that $U^{\dag}\geq 0$ and $V\geq 0$.
\end{defn}

\begin{defn}\label{wr} (Definition 1.2, \cite{miscal})
 A splitting $A=U-V$ of $A\in {\R}^{m\times n}$  is called a  {\it proper weak regular
 splitting}
 if it is a proper
splitting such that $U^{\dag}\geq 0$ and $U^{\dag}V\geq 0$.
\end{defn}

The class of matrices having a fixed positive real number in all the
entries   always have proper regular and proper weak regular
splittings. We next present an example of a proper splitting which
is a proper weak regular splitting but not a proper  regular
splitting.

\begin{ex}
Let $A= \begin{bmatrix}
                                                 \begin{array}{ccc}
                                                        9 & -8 & 15\\
                                                         -6 & 6 & -10\\
                                                        \end{array}
                                                        \\
                                                        \end{bmatrix} = \begin{bmatrix}
                                                 \begin{array}{ccc}
                                                        6 & -4 & 10\\
                                                         -3 & 4 & -5\\
                                                        \end{array}
                                                        \\
                                                       \end{bmatrix}- \begin{bmatrix}
                                                 \begin{array}{ccc}
                                                        -3 & 4 & -5\\
                                                         3 & -2 & 5\\
                                                        \end{array}
                                                        \\
                                                        \end{bmatrix}\\ =U-V.$ Then
                                                        $R(U)=R(A)$,
                                                        $N(U)=N(A)$,
 $U^{\dag}=\begin{bmatrix}
                                                 \begin{array}{ccc}
                                                        3/34 & 3/34 \\
                                                        1/4 & 1/2 \\
                                                          5/34 & 5/34 \\
                                                        \end{array}
                                                        \\
                                                        \end{bmatrix}\geq 0$
                                                         and $U^{\dag}V=\begin{bmatrix}
                                                 \begin{array}{ccc}
                                                        0 & 3/17 & 0\\
                                                        3/4 & 0 & 5/4\\
                                                          0 & 5/17 & 0\\
                                                        \end{array}
                                                        \\
                                                        \end{bmatrix}\geq 0$. Thus $A=U-V$
                                                        is a
                                                          proper weak
regular splitting but not a proper regular splitting since $V\ngeq
0$.
\end{ex}

Berman and Plemmons \cite{bpcones} initiated the study of
convergence theory of iteration scheme (\ref{eq1})  without terming
the class of proper splittings $A=U-V$ as  the proper regular and
proper weak regular splittings. Two of their results presented below
characterize semi-monotone matrices in terms of these class of
splittings.

\begin{thm}\label{PRC} ( Theorem 1.3, \cite{miscal})
 Let $A = U-V$ be a proper regular splitting of $A\in {\R}^{m\times n}$. Then
 $A^{\dag}\geq 0$ if and only if $\rho(U^{\dag}V) < 1.$
\end{thm}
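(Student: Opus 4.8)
The plan is to work throughout with the non-negative matrix $H:=U^{\dag}V$, which is non-negative precisely because the splitting is proper regular, i.e. $U^{\dag}\geq 0$ and $V\geq 0$. The structural engine is Theorem \ref{proper}: part (b) tells me $I-H$ is invertible, so (since $\rho(H)$ is itself an eigenvalue of the non-negative matrix $H$ by Theorem \ref{frob}) the value $1$ is never an eigenvalue of $H$ and in particular $\rho(H)\neq 1$; part (c) gives the factorization $A^{\dag}=(I-H)^{-1}U^{\dag}$, which links $A^{\dag}$ to $H$.

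For the implication $\rho(U^{\dag}V)<1\Rightarrow A^{\dag}\geq 0$ I would argue directly. Since $H\geq 0$ and $\rho(H)<1$, Theorem \ref{neuman} yields $(I-H)^{-1}=\sum_{k=0}^{\infty}H^{k}\geq 0$. Substituting into $A^{\dag}=(I-H)^{-1}U^{\dag}$ and using $U^{\dag}\geq 0$, the product of two non-negative matrices is non-negative, so $A^{\dag}\geq 0$. This direction is routine.

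The substantive direction is $A^{\dag}\geq 0\Rightarrow \rho(U^{\dag}V)<1$. The first move is to use the hypothesis through the matrix $A^{\dag}V$ rather than $A^{\dag}$ directly: from $A^{\dag}\geq 0$ and $V\geq 0$ I get $A^{\dag}V\geq 0$, and Theorem \ref{proper}(c) gives $A^{\dag}V=(I-H)^{-1}U^{\dag}V=(I-H)^{-1}H$. Next, applying Theorem \ref{frob} to $H\geq 0$, the spectral radius $\mu:=\rho(H)$ is an eigenvalue of $H$ with a non-negative eigenvector $y\geq 0$, $y\neq 0$. I would then transport $y$ through the factorization: from $Hy=\mu y$ one has $(I-H)y=(1-\mu)y$, hence $(I-H)^{-1}y=(1-\mu)^{-1}y$ (legitimate because $\mu\neq 1$), and therefore $(A^{\dag}V)y=(I-H)^{-1}Hy=\frac{\mu}{1-\mu}\,y$. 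Now suppose toward a contradiction that $\mu>1$; then the scalar $\frac{\mu}{1-\mu}$ is negative. But $A^{\dag}V\geq 0$ and $y\geq 0$ force $(A^{\dag}V)y\geq 0$ entrywise, while $\frac{\mu}{1-\mu}y\leq 0$ entrywise, so both sides must vanish, giving $y=0$, a contradiction. Since $\mu\neq 1$ is already known, only $\mu<1$ remains, i.e. $\rho(U^{\dag}V)<1$.

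The main obstacle is the nonobvious choice to feed the hypothesis $A^{\dag}\geq 0$ into the argument via $A^{\dag}V\geq 0$ and to pair it with the Perron eigenvector of $H$; once the eigenvector is pushed through $(I-H)^{-1}$, the sign of the transformed eigenvalue $\frac{\mu}{1-\mu}$ settles everything, and the invertibility of $I-H$ is exactly what excludes the borderline case $\mu=1$. As an alternative to the explicit eigenvector computation, the eigenvalue correspondence $\lambda_j=\frac{\mu_i}{1-\mu_i}$ of Lemma \ref{lemqeigval} could be invoked to relate the spectra of $U^{\dag}V$ and $A^{\dag}V$, but the direct Perron-vector argument is self-contained and I would prefer it.
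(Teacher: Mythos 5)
Your proof is correct, and each step is justified by results actually available in the paper: non-negativity of $H=U^{\dag}V$ from the proper regular hypothesis, invertibility of $I-H$ and the factorization $A^{\dag}=(I-H)^{-1}U^{\dag}$ from Theorem \ref{proper}, the Neumann series from Theorem \ref{neuman} for one direction, and the Perron eigenvector from Theorem \ref{frob} pushed through $(I-H)^{-1}$ to produce the eigenvalue $\mu/(1-\mu)$ of $A^{\dag}V\geq 0$ for the other, with Theorem \ref{proper}(b) correctly excluding the borderline case $\mu=1$. Note, however, that the paper itself contains no proof of this statement: Theorem \ref{PRC} is recalled from the literature (Theorem 1.3 of \cite{miscal}, going back to Berman and Plemmons \cite{bpcones}), so there is no in-paper argument to compare against; your argument is the standard one used in those sources, adapted to the rectangular setting via the Moore--Penrose identities, and it is sound as written.
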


\begin{thm}   (Theorem 3, \cite{bpcones}) \label{PWRC}
Let $A = U-V$ be a proper weak regular splitting of $A\in
{\R}^{m\times n}$. Then
 $A^{\dag}\geq 0$ if and only if $\rho(U^{\dag}V) < 1.$
\end{thm}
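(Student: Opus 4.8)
The plan is to set $H=U^{\dag}V$ and run everything through the factorization $A^{\dag}=(I-H)^{-1}U^{\dag}$ supplied by Theorem \ref{proper}(c), together with the two non-negativity hypotheses of a proper weak regular splitting, $U^{\dag}\geq 0$ and $H=U^{\dag}V\geq 0$. The direction $\rho(H)<1\Rightarrow A^{\dag}\geq 0$ is immediate: since $H\geq 0$ and $\rho(H)<1$, Theorem \ref{neuman} gives $(I-H)^{-1}=\sum_{k=0}^{\infty}H^{k}\geq 0$, so $A^{\dag}=(I-H)^{-1}U^{\dag}$ is a product of two non-negative matrices and hence $A^{\dag}\geq 0$.

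For the converse (the substantial direction) I would first establish the key identity
\[
A^{\dag}-U^{\dag}=HA^{\dag}.
\]
This follows by writing $HA^{\dag}=U^{\dag}VA^{\dag}$, substituting $V=U-A$, and using the Moore--Penrose bookkeeping: $U^{\dag}U=P_{R(A^{T})}$ (because $R(U^{T})=N(U)^{\perp}=N(A)^{\perp}=R(A^{T})$ for a proper splitting), $P_{R(A^{T})}A^{\dag}=A^{\dag}$ (since $R(A^{\dag})=R(A^{T})$), and $U^{\dag}P_{R(A)}=U^{\dag}UU^{\dag}=U^{\dag}$ (since $P_{R(A)}=P_{R(U)}=UU^{\dag}$). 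As $H\geq 0$ and $A^{\dag}\geq 0$, the identity yields $A^{\dag}-U^{\dag}=HA^{\dag}\geq 0$, so in particular $A^{\dag}\geq U^{\dag}\geq 0$.

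Next I would iterate $A^{\dag}=U^{\dag}+HA^{\dag}$ to obtain, for every $n$,
\[
A^{\dag}=\sum_{k=0}^{n-1}H^{k}U^{\dag}+H^{n}A^{\dag},
\]
in which every summand is non-negative. Hence the partial sums $\sum_{k=0}^{n-1}H^{k}U^{\dag}$ increase monotonically and are bounded above entrywise by $A^{\dag}$, so the series $\sum_{k=0}^{\infty}H^{k}U^{\dag}$ converges and in particular $H^{k}U^{\dag}\to 0$. The final step upgrades this to $H^{n}\to 0$ using $H=HP_{R(A^{T})}$: since $P_{R(A^{T})}=U^{\dag}U$ and $I-P_{R(A^{T})}$ projects onto $N(A)=N(U)$, and since $N(A)\subseteq N(V)$ (from $A=U-V$ together with $N(A)=N(U)$, which forces $Vz=-Az=0$ for $z\in N(A)$), we get $VP_{R(A^{T})}=V$ and therefore $HP_{R(A^{T})}=U^{\dag}VP_{R(A^{T})}=U^{\dag}V=H$. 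Consequently $H^{n}=H^{n}P_{R(A^{T})}=(H^{n}U^{\dag})U\to 0$, and $H^{n}\to 0$ is equivalent to $\rho(H)<1$.

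I expect the main obstacle to be precisely the pseudoinverse/projector bookkeeping rather than the convergence idea itself. In the nonsingular setting of Benzi and Szyld both the identity $A^{-1}-U^{-1}=HA^{-1}$ and the passage from $H^{n}U^{-1}\to 0$ to $H^{n}\to 0$ are trivial because $U^{-1}U=I$; here one must instead verify that the orthogonal projectors $U^{\dag}U=P_{R(A^{T})}$ and $UU^{\dag}=P_{R(A)}$ act as the identity on exactly the right subspaces, which is the point at which the proper-splitting hypotheses $R(U)=R(A)$ and $N(U)=N(A)$ are genuinely used. Establishing $VP_{R(A^{T})}=V$ (equivalently $N(A)\subseteq N(V)$) is the crucial observation that keeps the iterate $H^{n}$ from "leaking" outside $R(A^{T})$ and makes the convergence argument close.
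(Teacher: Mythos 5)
Your proof is correct, but note that there is no in-paper proof to compare it against: the paper quotes this statement verbatim as Theorem~3 of Berman and Plemmons \cite{bpcones} and gives no argument for it. Judged on its own merits, your proposal is sound. The easy direction is exactly the Neumann-series argument applied to $A^{\dag}=(I-H)^{-1}U^{\dag}$ from Theorem~\ref{proper}(c). For the converse, the identity $A^{\dag}=U^{\dag}+HA^{\dag}$ is correctly derived (it uses precisely $U^{\dag}U=P_{R(A^T)}$, $P_{R(A^T)}A^{\dag}=A^{\dag}$ and $U^{\dag}AA^{\dag}=U^{\dag}UU^{\dag}=U^{\dag}$, all legitimate consequences of $R(U)=R(A)$, $N(U)=N(A)$); the monotone bounded partial sums give $H^{n}U^{\dag}\to 0$; and the observation $N(A)=N(U)\subseteq N(V)$, hence $H=HU^{\dag}U$ and $H^{n}=(H^{n}U^{\dag})U\to 0$, closes the argument, since $H^{n}\to 0$ is indeed equivalent to $\rho(H)<1$.

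As for the comparison: what you have written is the classical weak-regular-splitting convergence proof (Varga/Ortega--Rheinboldt style, essentially what Berman--Plemmons do) transplanted to the Moore--Penrose setting, and the projector bookkeeping you flag is exactly the new content required for rectangular matrices. The paper's own toolkit suggests two different routes it could have taken instead: (i) via Lemma~\ref{lemqeigval}, the eigenvalue correspondence $\mu=\lambda/(1+\lambda)$ between $U^{\dag}V$ and $A^{\dag}V$, which is how the paper argues in Theorem~\ref{PRC2} and in step $(f)\Rightarrow(g)$ of Theorem~\ref{PWRC2}; or (ii) the bounded-partial-sums device used in the proof of Theorem~\ref{Main13}, where from $(I-H)A^{\dag}\geq 0$ it is concluded that the partial sums of $\sum_{m}H^{m}$ are uniformly bounded. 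Your argument is closest to (ii) but is actually tighter: by keeping the factor $U^{\dag}$ inside the series and recovering $H^{n}$ from $H^{n}U^{\dag}$ via $H=HU^{\dag}U$, you obtain $H^{n}\to 0$ outright, with no Perron-vector considerations and no need to pass from boundedness of $\bigl(\sum_{k\leq m}H^{k}\bigr)W$ for a fixed nonnegative $W$ to boundedness of $\sum_{k\leq m}H^{k}$ itself --- a passage that Theorem~\ref{Main13} leaves implicit.
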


Noted next  result  is proved in \cite{miscal} which contains
equivalent convergence  condition for iteration scheme (\ref{eq1}).

\begin{thm}\label{PRC2}(Theorem 3.1, \cite{miscal})
Let  $A=U-V$ be a    proper regular splitting of $A\in {\R}^{m\times
n}$. If
 $A^{\dag}\geq 0$, then\\
(a)  $A^{\dag}\geq U^{\dag}$;\\
(b) $\rho(A^{\dag}V)\geq \rho(U^{\dag}V)$;\\
(c) $\rho(U^{\dag}V)=\frac{\rho(A^{\dag}V)}{1+\rho(A^{\dag}V)}<1$.
 \end{thm}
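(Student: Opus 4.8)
The overall strategy is to turn the proper-splitting identity of Theorem \ref{proper} into a Neumann-series expansion of $A^{\dag}$, read off the matrix inequality (a) term by term, and then transport the inequalities to spectral radii through Perron--Frobenius theory (Theorem \ref{frob}) and the eigenvalue correspondence of Lemma \ref{lemqeigval}. To begin, since $A=U-V$ is proper regular with $A^{\dag}\geq 0$, Theorem \ref{PRC} gives $\rho(U^{\dag}V)<1$. Regularity also gives $U^{\dag}\geq 0$ and $V\geq 0$, so $U^{\dag}V\geq 0$, and Theorem \ref{neuman} applies to yield $(I-U^{\dag}V)^{-1}=\sum_{k=0}^{\infty}(U^{\dag}V)^{k}\geq 0$. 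Feeding this into the formula $A^{\dag}=(I-U^{\dag}V)^{-1}U^{\dag}$ from Theorem \ref{proper}(c) and isolating the $k=0$ term gives
\[A^{\dag}=U^{\dag}+\sum_{k=1}^{\infty}(U^{\dag}V)^{k}U^{\dag}.\]
Each summand is a product of the non-negative matrices $U^{\dag}V$ and $U^{\dag}$, hence non-negative, so the entire tail is non-negative and $A^{\dag}\geq U^{\dag}$, which proves (a).

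Part (b) is then a short consequence of (a). From $A^{\dag}\geq U^{\dag}\geq 0$ and $V\geq 0$, right-multiplication by $V$ preserves the order, so $A^{\dag}V\geq U^{\dag}V\geq 0$. The monotonicity of the spectral radius on non-negative matrices, Theorem \ref{lspecageb}, then gives $\rho(A^{\dag}V)\geq \rho(U^{\dag}V)$.

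Part (c), which I expect to be the crux, sharpens (b) to an exact identity using the increasing map $t\mapsto t/(1+t)$. Write $a=\rho(A^{\dag}V)$ and $u=\rho(U^{\dag}V)$; both are non-negative, and $u<1$ by Theorem \ref{PRC}. Because $A^{\dag}V\geq 0$, Theorem \ref{frob} makes $a$ an actual eigenvalue of $A^{\dag}V$, and Lemma \ref{lemqeigval} then supplies a matching eigenvalue $\frac{a}{1+a}$ of $U^{\dag}V$; being a non-negative real eigenvalue, it is bounded by the spectral radius, so $\frac{a}{1+a}\leq u$. Symmetrically, since $U^{\dag}V\geq 0$, the number $u$ is an eigenvalue of $U^{\dag}V$, and Lemma \ref{lemqeigval} supplies the matching eigenvalue $\frac{u}{1-u}$ of $A^{\dag}V$; because $0\leq u<1$ this is again a genuine non-negative real number, whence $\frac{u}{1-u}\leq a$. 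Rearranging this second bound gives $u(1+a)\leq a$, i.e. $u\leq\frac{a}{1+a}$, and together with the first bound this forces $u=\frac{a}{1+a}$. Finally $\frac{a}{1+a}<1$ for every $a\geq 0$, so $\rho(U^{\dag}V)<1$, completing (c). The one point needing care here is verifying that the transformed quantities $\frac{a}{1+a}$ and $\frac{u}{1-u}$ are honestly non-negative reals---guaranteed by $a\geq 0$ and $0\leq u<1$---so that each is dominated by the spectral radius of the matrix to which it belongs; the two opposing inequalities then pinch $u$ to the single admissible value.
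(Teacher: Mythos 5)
Your proof is correct. Note that the paper itself states Theorem \ref{PRC2} without proof (it is quoted from \cite{miscal}), so the closest in-paper comparison is the proof of Theorem \ref{PWRC2}, which establishes the analogous chain for proper \emph{weak} regular splittings; your argument uses the same toolkit (Theorem \ref{proper}(c), the Neumann series of Theorem \ref{neuman}, Perron--Frobenius via Theorem \ref{frob}, and Lemma \ref{lemqeigval}) but differs in two places. For the inequality $A^{\dag}V\geq U^{\dag}V$ you post-multiply part (a) by $V\geq 0$, which is legitimate here precisely because the splitting is \emph{regular}; the paper's step $(e)\Rightarrow(f)$ in Theorem \ref{PWRC2} cannot do this (since $V\ngeq 0$ there) and instead uses the identity $A^{\dag}V-U^{\dag}V=U^{\dag}VA^{\dag}V$. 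More substantively, in part (c) the paper argues that the strictly increasing map $\eta\mapsto\eta/(1+\eta)$ sends the ``maximum'' eigenvalue $\lambda$ to the ``maximum'' eigenvalue $\mu$, which glosses over the fact that eigenvalues of $U^{\dag}V$ and $A^{\dag}V$ may be complex, so that monotonicity of a real function does not immediately control moduli. Your two-sided pinching argument repairs exactly this: you apply Lemma \ref{lemqeigval} only at the two Perron roots $a=\rho(A^{\dag}V)$ and $u=\rho(U^{\dag}V)$, check that the transformed values $a/(1+a)$ and $u/(1-u)$ are genuine non-negative real eigenvalues of the opposite matrix, and conclude $a/(1+a)\leq u\leq a/(1+a)$. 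This is a cleaner and more rigorous route to the identity $\rho(U^{\dag}V)=\rho(A^{\dag}V)/(1+\rho(A^{\dag}V))$ than the monotone-maximum argument the paper uses in the analogous step.
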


The conditions of the proper weak regular splitting still can be
weakened by dropping  the condition $U^{\dag}\geq 0$, and the
resultant splitting is known as  {\it proper nonnegative (proper
weak) splitting} (Definition 3.1, \cite{miscam}). A convergence
result for a proper nonnegative splitting is obtained below.

\begin{lem} \label{cam1}(Lemma 3.4, \cite{miscam})
Let $A=U-V$ be a proper nonnegative  splitting of $A\in
{\R}^{m\times n}$ and  $A^{\dag}U\geq 0$.
 Then $\rho(U^{\dag}V)=\frac{\rho(A^{\dag}U)-1}{\rho(A^{\dag}U)}<1$.
\end{lem}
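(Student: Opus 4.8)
The plan is to reduce everything to the invariant subspace $R(A^T)$, establish a clean operator identity there relating $A^{\dag}U$ and $U^{\dag}V$, and then let the Perron--Frobenius theory of Theorem \ref{frob} pin down the two spectral radii.

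First I would record the structural facts. Since $N(U)=N(A)$ we have $R(U^T)=N(U)^{\perp}=N(A)^{\perp}=R(A^T)$, hence $U^{\dag}U=P_{R(U^T)}=P_{R(A^T)}$. Both $U^{\dag}V$ and $A^{\dag}U$ have range inside $R(A^T)$ (because $R(U^{\dag})=R(U^T)=R(A^T)$ and $R(A^{\dag})=R(A^T)$), and both annihilate $N(A)=N(U)$ (for $x\in N(U)$ one has $Ux=0$ and $Vx=Ux-Ax=0$). Thus $R(A^T)$ is invariant for each, the complementary part $N(A)$ contributes only zero eigenvalues, and $\rho(U^{\dag}V)$, $\rho(A^{\dag}U)$ equal the spectral radii of the respective restrictions. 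Using Theorem \ref{proper}(c), $A^{\dag}U=(I-U^{\dag}V)^{-1}U^{\dag}U=(I-U^{\dag}V)^{-1}P_{R(A^T)}$; since $I-U^{\dag}V$ fixes $N(A)$ and maps $R(A^T)$ into itself, so does its inverse, and restricting to $R(A^T)$ yields the identity $A^{\dag}U=(I-U^{\dag}V)^{-1}$ there. Consequently an eigenpair $U^{\dag}Vx=\mu x$ with $x\in R(A^T)$ corresponds to $A^{\dag}Ux=\tfrac{1}{1-\mu}\,x$, and conversely.

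Next I would feed in the nonnegativity. Set $\rho_H=\rho(U^{\dag}V)$ and $\rho_K=\rho(A^{\dag}U)$. Applying Theorem \ref{frob} to $U^{\dag}V\ge 0$ (which holds by definition of a proper nonnegative splitting) gives $x\ge 0$, $x\ne 0$ with $U^{\dag}Vx=\rho_H x$; when $\rho_H>0$ this $x$ lies in $R(A^T)$, so the operator identity yields $A^{\dag}Ux=\tfrac{1}{1-\rho_H}\,x$, and since $A^{\dag}U\ge 0$ and $x\ge 0$ the scalar $\tfrac{1}{1-\rho_H}$ is nonnegative, forcing $\rho_H<1$ and $\tfrac{1}{1-\rho_H}\le \rho_K$. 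Symmetrically, $A^{\dag}U$ restricts to the invertible map $(I-U^{\dag}V)^{-1}$ on $R(A^T)$, so $\rho_K>0$; by Theorem \ref{frob} its Perron vector $y\ge 0$ lies in $R(A^T)$ and satisfies $U^{\dag}Vy=\tfrac{\rho_K-1}{\rho_K}\,y$, whence $\tfrac{\rho_K-1}{\rho_K}\ge 0$ (so $\rho_K\ge 1$) and $\tfrac{\rho_K-1}{\rho_K}\le \rho_H$.

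Finally I would combine the two inequalities. The bound $\tfrac{\rho_K-1}{\rho_K}\le \rho_H$ rearranges, using $1-\rho_H>0$ and $\rho_K>0$, to $\rho_K(1-\rho_H)\le 1$, that is $\rho_K\le \tfrac{1}{1-\rho_H}$; together with $\tfrac{1}{1-\rho_H}\le \rho_K$ this gives $\rho_K=\tfrac{1}{1-\rho_H}$, equivalently $\rho(U^{\dag}V)=\tfrac{\rho(A^{\dag}U)-1}{\rho(A^{\dag}U)}$, with $\rho(U^{\dag}V)<1$ already established. The main obstacle I anticipate is the range/null-space bookkeeping needed to justify the restricted identity $A^{\dag}U=(I-U^{\dag}V)^{-1}$ on $R(A^T)$ and to guarantee that the Perron eigenvectors actually lie in $R(A^T)$. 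The degenerate case $\rho_H=0$ (where the eigenvector of $U^{\dag}V$ may fall into $N(A)$) must be checked separately, but there $U^{\dag}V$ is nilpotent on $R(A^T)$, so $(I-U^{\dag}V)^{-1}$ is unipotent, forcing $\rho_K=1$ and the formula holds trivially.
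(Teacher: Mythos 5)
Your proposal is correct. Note, however, that the paper itself offers no proof of Lemma \ref{cam1}: it is quoted verbatim from Lemma 3.4 of \cite{miscam}, so there is no in-paper argument to compare against (the later appeal in Theorem \ref{PWRC2}, ``$(a)\Rightarrow(b)$: Follows from the proof of Lemma 3.7,'' likewise points to that external source). Judged on its own merits, your argument is sound and self-contained, and it uses only tools actually stated in the paper: Theorem \ref{proper}(b),(c) to get the resolvent identity $A^{\dag}U=(I-U^{\dag}V)^{-1}$ on $R(A^{T})$, and Theorem \ref{frob} applied twice to transfer the Perron eigenvector of $U^{\dag}V$ to an eigenvector of $A^{\dag}U$ and vice versa, yielding the two inequalities $\frac{1}{1-\rho_H}\le\rho_K$ and $\frac{\rho_K-1}{\rho_K}\le\rho_H$ whose combination forces equality. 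This two-sided Perron argument is exactly the right mechanism here --- the M\"{o}bius map $\mu\mapsto\frac{1}{1-\mu}$ is not monotone on the whole spectrum, so one genuinely needs both spectral radii realized as eigenvalues with nonnegative eigenvectors, which you do. It is also consistent in spirit with what the paper does elsewhere: the eigenvalue correspondence echoes Lemma \ref{lemqeigval} and the proof of $(f)\Rightarrow(g)$ in Theorem \ref{PWRC2}, while the Perron-vector transfer is the same device used in Theorem \ref{tcomp2}. Two points of care that you handled correctly and that a sloppier write-up would miss: the eigenvectors must be certified to lie in $R(A^{T})$ before the restricted identity applies (your $x=\rho_H^{-1}U^{\dag}Vx\in R(U^{\dag})=R(A^{T})$ step), and the degenerate case $\rho_H=0$, where the Perron vector of $U^{\dag}V$ may sit in $N(A)$, needs the separate nilpotent/unipotent observation giving $\rho_K=1$.
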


We remark that the above result is also true for the proper weak
regular splitting. Next result further adds a few more equivalent
conditions to the above Lemma for a proper weak regular splitting.

 \begin{thm}\label{PWRC2}
 Let    $A=U-V $ be a proper weak regular splitting of $ A\in {\R}^{m\times n}$.
  Then $(a)\Rightarrow (b) \Rightarrow (c)\Rightarrow (d)\Rightarrow
   (e)\Rightarrow (f)\Rightarrow (g)$.\\
(a) $A^{\dag}U\geq 0$;\\
(b) $\rho(U^{\dag}V)=\frac{\rho(A^{\dag}U)-1}{\rho(A^{\dag}U)}$;\\
(c) $\rho(U^{\dag}V)<1$;\\
(d) $(I-U^{\dag}V)^{-1}\geq 0$;\\
(e) $A^{\dag}V\geq 0$;\\
(f) $A^{\dag}V\geq U^{\dag}V$;\\
(g) $\rho(U^{\dag}V)=\frac{\rho(A^{\dag}V)}{1+\rho(A^{\dag}V)}<1$.
\end{thm}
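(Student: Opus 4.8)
The plan is to establish the cycle one link at a time, using throughout the structural identities that a proper splitting supplies. Write $T=U^{\dag}V\ge 0$. By Theorem \ref{proper} we have $A^{\dag}=(I-T)^{-1}U^{\dag}$, hence
\[
A^{\dag}V=(I-T)^{-1}T=(I-T)^{-1}-I,\qquad\text{equivalently}\qquad (I-T)^{-1}=I+A^{\dag}V .
\]
These identities convert every statement about $A^{\dag}V$ into one about $T$ and form the backbone of the whole chain; I also keep in mind that $I-T$ is invertible by Theorem \ref{proper}(b).

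For $(a)\Rightarrow(b)$ I would simply invoke Lemma \ref{cam1}, which by the remark following it holds for a proper weak regular splitting: the hypothesis $A^{\dag}U\ge 0$ yields precisely the equality in (b). For $(b)\Rightarrow(c)$, put $\beta=\rho(A^{\dag}U)$; since the right-hand side of (b) is defined, $\beta\ne 0$, and $\beta\ge 0$ because a spectral radius is non-negative, so $\beta>0$. As $T\ge 0$ forces $\rho(T)\ge 0$, the relation $\rho(T)=1-\tfrac1\beta\ge 0$ gives $\beta\ge 1$, whence $\rho(T)=1-\tfrac1\beta<1$. The step $(c)\Rightarrow(d)$ is immediate from Theorem \ref{neuman}: $T\ge 0$ together with $\rho(T)<1$ gives $(I-T)^{-1}=\sum_{k\ge 0}T^{k}\ge 0$.

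The two middle links are purely algebraic. For $(d)\Rightarrow(e)$ I use $A^{\dag}V=(I-T)^{-1}T$, a product of the non-negative matrices $(I-T)^{-1}$ and $T$, so $A^{\dag}V\ge 0$. For $(e)\Rightarrow(f)$ the key identity is
\[
A^{\dag}V-U^{\dag}V=\bigl[(I-T)^{-1}-I\bigr]T=(A^{\dag}V)(U^{\dag}V),
\]
obtained from $(I-T)^{-1}-I=A^{\dag}V$; since $A^{\dag}V\ge 0$ by (e) and $U^{\dag}V\ge 0$, the product is non-negative, i.e. $A^{\dag}V\ge U^{\dag}V$.

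I expect $(f)\Rightarrow(g)$ to be the real obstacle, and I would handle it with Perron--Frobenius theory (Theorem \ref{frob}) in three moves. First, (f) gives $A^{\dag}V\ge U^{\dag}V\ge 0$, so $A^{\dag}V\ge 0$. Second, I must rule out $\rho(T)\ge 1$: choose $y\ge 0$, $y\ne 0$ with $Ty=\rho(T)y$; if $\rho(T)=1$ then $(I-T)y=0$ contradicts invertibility of $I-T$, while if $\rho(T)>1$ then $A^{\dag}V y=\tfrac{\rho(T)}{1-\rho(T)}y$ exhibits a strictly negative eigenvalue of $A^{\dag}V$ with non-negative eigenvector, contradicting $A^{\dag}V\ge 0$; hence $\rho(T)<1$. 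Third, with $\rho(T)<1$ the mutually inverse maps $\mu\mapsto\tfrac{\mu}{1-\mu}$ and $\lambda\mapsto\tfrac{\lambda}{1+\lambda}$ (cf. Lemma \ref{lemqeigval}) transport Perron eigenvectors both ways: a non-negative eigenvector of $A^{\dag}V$ for $\rho(A^{\dag}V)$ is an eigenvector of $T$ for $\tfrac{\rho(A^{\dag}V)}{1+\rho(A^{\dag}V)}$, giving $\tfrac{\rho(A^{\dag}V)}{1+\rho(A^{\dag}V)}\le\rho(T)$, and conversely a non-negative eigenvector of $T$ for $\rho(T)$ gives $\tfrac{\rho(T)}{1-\rho(T)}\le\rho(A^{\dag}V)$, which by monotonicity of $\lambda\mapsto\tfrac{\lambda}{1+\lambda}$ yields the reverse inequality. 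Combining the two produces the equality in (g), and the bound $\rho(T)<1$ already proved supplies the final strict inequality. The delicate point is exactly this monotonicity: the two Möbius maps are order-preserving only on $[0,1)$ and $[0,\infty)$ respectively, so the preliminary bound $\rho(T)<1$ is what legitimizes the comparison of spectral radii, making its proof the crux of the argument.
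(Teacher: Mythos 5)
Your proof is correct, and for the links $(a)\Rightarrow(b)$ through $(e)\Rightarrow(f)$ it is essentially the paper's own argument: the paper likewise gets $(a)\Rightarrow(b)$ from Lemma \ref{cam1} (valid for proper weak regular splittings, which are in particular proper nonnegative splittings), $(c)\Rightarrow(d)$ from Theorem \ref{neuman}, and $(d)\Rightarrow(e)$, $(e)\Rightarrow(f)$ from the identity $A^{\dag}V=(I-U^{\dag}V)^{-1}U^{\dag}V$ supplied by Theorem \ref{proper}(c); your identity $A^{\dag}V-U^{\dag}V=(A^{\dag}V)(U^{\dag}V)$ is the paper's $A^{\dag}V-U^{\dag}V=(U^{\dag}V)(A^{\dag}V)$ in disguise, since $T=U^{\dag}V$ commutes with $(I-T)^{-1}$, and you also fill in the paper's ``$(b)\Rightarrow(c)$: Obvious'' with an actual argument. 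The genuine divergence is in $(f)\Rightarrow(g)$. There the paper cites Lemma \ref{lemqeigval} for the eigenvalue correspondence $\mu=\lambda/(1+\lambda)$ and then argues that $\mu$ attains its maximum when $\lambda$ does, i.e., it applies monotonicity of $\eta\mapsto\eta/(1+\eta)$ directly to the spectra. You instead first prove $\rho(U^{\dag}V)<1$ by contradiction (a Perron eigenvector together with invertibility of $I-U^{\dag}V$ kills $\rho=1$, and nonnegativity of $A^{\dag}V$ kills $\rho>1$), and only then transport nonnegative Perron eigenvectors through $A^{\dag}V=(I-T)^{-1}T$ in both directions, obtaining the two inequalities whose conjunction is the equality in (g). Your route is longer but more watertight: the paper's monotonicity step silently treats all eigenvalues as nonnegative reals, whereas $|\lambda/(1+\lambda)|$ need not be monotone in $|\lambda|$ for complex or negative $\lambda$; working with eigenvectors (which exist by Theorem \ref{frob} precisely because $U^{\dag}V\ge 0$ and, by (f), $A^{\dag}V\ge 0$) sidesteps this, and your preliminary bound $\rho(U^{\dag}V)<1$ is exactly what licenses the use of the order-preserving M\"obius maps --- a point the paper's proof leaves implicit.
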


\begin{proof}
$ (a)\Rightarrow (b)$: Follows from the proof of Lemma 3.7.\\
$ (b)\Rightarrow (c)$: Obvious.\\
 $ (c)\Rightarrow (d)$: The conditions
$\rho(U^{\dag}V)<1$ and  $U^{\dag}V\geq 0$ together yields that
$(I-U^{\dag}V)^{-1}=\displaystyle \sum_{k=0}^{\infty}(U^{\dag}V)^{k}
\geq 0$, by
Theorem \ref{neuman}.\\
$ (d)\Rightarrow (e)$:  By Theorem \ref{proper} (c), we obtain
$A^{\dag}=(I-U^{\dag}V)^{-1}U^{\dag}$. Post-multiplying $V$ both the
sides, we get $A^{\dag}V=(I-U^{\dag}V)^{-1}U^{\dag}V$. Hence
$A^{\dag}V\geq 0$ as $(I-U^{\dag}V)^{-1}\geq 0$
and  $U^{\dag}V\geq 0$.\\
$ (e)\Rightarrow (f)$: We have
$A^{\dag}V=(I-U^{\dag}V)^{-1}U^{\dag}V$ by Theorem \ref{proper} (c).
Pre-multiplying $I-U^{\dag}V$ both the sides, we obtain
 $(I-U^{\dag}V)A^{\dag}V=U^{\dag}V$ which implies
$A^{\dag}V-U^{\dag}V=U^{\dag}VA^{\dag}V$.  Thus
 $A^{\dag}V\geq U^{\dag}V$ as $U^{\dag}V\geq 0$ and
  $A^{\dag}V\geq 0$.\\
$(f)\Rightarrow (g)$: Observe that $A^{\dag}V\geq 0$ as
 $U^{\dag}V\geq 0$.
  Let $\lambda$  be any eigenvalue of
$A^{\dag}V$ and $f(\eta)=\frac{\eta}{1+\eta}, ~~\eta \geq 0$. Then
$f$ is a strictly increasing function. Let $\mu$ be
 any eigenvalue of
 $U^{\dag}V$.
 We now have  $\mu=\frac{\lambda}{1+\lambda}$ by Lemma \ref{lemqeigval}.
  Hence,  $\mu$  attains its
  maximum
when $\lambda$ is maximum. But $\lambda$ is maximum when
$\lambda=\rho(A^{\dag}V)$.  As a result, the maximum value of $\mu$
is $\rho(U^{\dag}V)$. Thus
$\rho(U^{\dag}V)=\frac{\rho(A^{\dag}V)}{1+\rho(A^{\dag}V)}<1$.
\end{proof}

 The rate of convergence of the iteration scheme
(\ref{eq1}) depends on the spectral radius of the iteration matrix
$U^{\dag}V$. Hence, the spectral radius of the iteration matrix
plays a vital role in comparison of the speed of convergence of
different iterative schemes of the same linear system given in
(\ref{eq0}). Next result compares spectral radii of the iteration
matrices between a proper regular splitting and a proper weak
regular splitting arising out of the
 same coefficient matrix $A$.

\begin{thm}\label{tcomp1} Let $A=B-C$ be a proper weak regular
splitting and $A=U-V$ be a proper regular splitting   of a
semi-monotone matrix $A\in {\R}^{m\times n}$. If $A\geq 0$ and
$B^{\dag}\geq U^{\dag}$, then
$$ \rho(B^{\dag}C)\leq \rho(U^{\dag}V)<1 .$$
\end{thm}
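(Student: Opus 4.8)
The plan is to deduce the comparison $\rho(B^{\dag}C)\le\rho(U^{\dag}V)$ from an entrywise inequality between the two iteration matrices and then invoke the monotonicity of the spectral radius on nonnegative matrices (Theorem \ref{lspecageb}). Concretely, I aim to establish the sandwich $0\le B^{\dag}C\le U^{\dag}V$; since $B^{\dag}C\ge 0$ is already guaranteed by the proper weak regular hypothesis, the real content is the upper bound $B^{\dag}C\le U^{\dag}V$. The strict inequality $\rho(U^{\dag}V)<1$ needs no separate argument: $A=U-V$ is a proper regular splitting of the semi-monotone matrix $A$ (so $A^{\dag}\ge 0$), whence Theorem \ref{PRC} applies directly.

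For the upper bound I would first record the range/null-space bookkeeping forced by the proper splittings: $N(B)=N(U)=N(A)$ gives $R(B^{T})=R(U^{T})=R(A^{T})$, and hence $B^{\dag}B=U^{\dag}U=A^{\dag}A=P_{R(A^{T})}$. Writing $C=B-A$ and $V=U-A$ then yields $B^{\dag}C=P_{R(A^{T})}-B^{\dag}A$ and $U^{\dag}V=P_{R(A^{T})}-U^{\dag}A$. Subtracting, the common projector cancels and I obtain the clean identity $U^{\dag}V-B^{\dag}C=(B^{\dag}-U^{\dag})A$. The two standing hypotheses now fit exactly: $B^{\dag}-U^{\dag}\ge 0$ by assumption and $A\ge 0$, so the product $(B^{\dag}-U^{\dag})A\ge 0$. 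This gives $B^{\dag}C\le U^{\dag}V$, completing the sandwich $0\le B^{\dag}C\le U^{\dag}V$, and Theorem \ref{lspecageb} then delivers $\rho(B^{\dag}C)\le\rho(U^{\dag}V)<1$.

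The crux — and the only step that genuinely uses both hypotheses $A\ge 0$ and $B^{\dag}\ge U^{\dag}$ — is the cancellation producing $U^{\dag}V-B^{\dag}C=(B^{\dag}-U^{\dag})A$; everything downstream is immediate. The point I would be most careful about is justifying $B^{\dag}B=U^{\dag}U=P_{R(A^{T})}$, since it is precisely this shared projector that must drop out under subtraction, and it relies on the proper-splitting conditions $R(\cdot)=R(A)$, $N(\cdot)=N(A)$. An alternative route uses the eigenvalue correspondence of Lemma \ref{lemqeigval} together with the representation $\rho(U^{\dag}V)=\frac{\rho(A^{\dag}V)}{1+\rho(A^{\dag}V)}$ from Theorem \ref{PRC2}(c) (and its analogue for the $B$-splitting), reducing the problem to $\rho(A^{\dag}C)\le\rho(A^{\dag}V)$; this also works, but it is longer and ultimately rests on the same factorisation, so I would prefer the direct comparison above.
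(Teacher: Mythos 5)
Your proof is correct, and it takes a genuinely different --- in fact shorter and stronger --- route than the paper's. The paper never compares the two iteration matrices entrywise. Instead it reduces the claim to $\rho(A^{\dag}V)\geq\rho(A^{\dag}C)$, using that $\rho(U^{\dag}V)=\rho(A^{\dag}V)/(1+\rho(A^{\dag}V))$ and $\rho(B^{\dag}C)=\rho(A^{\dag}C)/(1+\rho(A^{\dag}C))$ are strictly increasing functions of these quantities (precisely the ``alternative route'' you mention, resting on Lemma \ref{lemqeigval} via Theorems \ref{PRC2} and \ref{PWRC2}); it then converts $B^{\dag}\geq U^{\dag}$ into $A^{\dag}VA^{\dag}\geq A^{\dag}CA^{\dag}$ by sandwiching between the nonnegative matrices $I+A^{\dag}C$ and $I+VA^{\dag}$ (which uses the representations $B^{\dag}=(I+A^{\dag}C)^{-1}A^{\dag}$ and $U^{\dag}=A^{\dag}(I+VA^{\dag})^{-1}$), post-multiplies by $V\geq 0$ and by $A\geq 0$ to get $(A^{\dag}V)^{2}\geq A^{\dag}CA^{\dag}V$ and $A^{\dag}VA^{\dag}C\geq (A^{\dag}C)^{2}$, and finishes with $\rho(XY)=\rho(YX)$ and Theorem \ref{lspecageb}. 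Your argument bypasses all of this machinery: the projector identity $B^{\dag}B=U^{\dag}U=P_{R(A^{T})}$, valid because both splittings are proper, gives $U^{\dag}V-B^{\dag}C=(B^{\dag}-U^{\dag})A\geq 0$, hence the entrywise domination $0\leq B^{\dag}C\leq U^{\dag}V$ --- a conclusion strictly stronger than the spectral-radius inequality --- after which one application of Theorem \ref{lspecageb} and Theorem \ref{PRC} finishes. Two further observations: (i) your comparison step never uses $V\geq 0$ or $U^{\dag}\geq 0$, so your proof actually establishes the theorem with $A=U-V$ only a proper weak regular splitting (convergence then coming from Theorem \ref{PWRC}), a generalization the paper's proof cannot give since it multiplies by $V$; (ii) the only pedantic caveat is the degenerate case $(B^{\dag}-U^{\dag})A=0$, formally excluded by the paper's definition of ``$\geq 0$'', but then $B^{\dag}C=U^{\dag}V$ and the conclusion is trivial, and the paper's own proof shares this quirk.
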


\begin{proof}
By Theorem \ref{PRC} and Theorem \ref{PWRC},  we have
$\rho(U^{\dag}V)<1$ and $\rho(B^{\dag}C)< 1$.  Also
$\rho(U^{\dag}V)$ and $\rho(B^{\dag}C)$ are strictly increasing
functions of $\rho(A^{\dag}V)$ and $\rho(A^{\dag}C)$, so it suffices
to show that
$$\rho(A^{\dag}V)\geq \rho(A^{\dag}C).$$
But    $I+A^{\dag}C$ and $I+VA^{\dag}$ are both invertible as
$A=B-C=U-V$ are proper splittings. The conditions $A=B-C$
  is a proper weak regular splitting and $\rho(B^{\dag}C)<1$ implies that $A^{\dag}C\geq
  0$ by Theorem \ref{neuman} and Theorem \ref{proper} (c) which in turn
   yields $I+A^{\dag}C\geq  0$. Clearly, $I+VA^{\dag}\geq
  0$. Now $B^{\dag}\geq U^{\dag}$ yields
$A^{\dag}(I+VA^{\dag})\geq (I+A^{\dag}C)A^{\dag}$ i.e.,
$A^{\dag}VA^{\dag}\geq A^{\dag}CA^{\dag}.$ Then, post-multiplying it
by $V$, we have
$$(A^{\dag}V)^{2}\geq A^{\dag}CA^{\dag}V.$$ Again, post-multiplying $A^{\dag}VA^{\dag}\geq A^{\dag}CA^{\dag}$ by $A$,
we get $A^{\dag}VA^{\dag}A=A^{\dag}V \geq A^{\dag}CA^{\dag}A=
A^{\dag}C$. So
$$A^{\dag}VA^{\dag}C\geq (A^{\dag}C)^{2}.$$ Therefore, by Theorem
\ref{lspecageb}, we have
$$\rho^2(A^{\dag}V)\geq \rho(A^{\dag}VA^{\dag}C)= \rho(A^{\dag}CA^{\dag}V)
 \geq \rho^2(A^{\dag}C).$$ Hence
$\rho(A^{\dag}V)\geq \rho(A^{\dag}C).$ Thus
$$ \rho(B^{\dag}C)\leq \rho(U^{\dag}V)<1 .$$
\end{proof}

%A new class of splittings  is introduced next to avoid the condition
%$A\geq 0$ in the above theorem.
%
%\begin{defn} A splitting $A=U-V$ of $A\in {\R}^{m\times n}$ is called a
% {\it strictly proper regular
% splitting}
% if it is a proper
%splitting such that $U^{\dag}> 0$ and $V\geq 0$.
%\end{defn}
%
%We now present a result which does not need the condition $A\geq 0$
%but compares convergence rate of a strictly proper regular and a
%proper weak regular splitting of $A$.

We now present a result which replaces the condition $A\geq 0$ in
the above theorem by row sums of $U^{\dag}$  are positive.

%\begin{thm}\label{tcomp2}
%Let $A=B-C$ be a proper weak regular splitting and $A=U-V$ be a
%strictly proper regular   of a semi-monotone matrix $A\in
%{\R}^{m\times n}$. If $B^{\dag}\geq U^{\dag}$, then
%$$ \rho(B^{\dag}C)\leq \rho(U^{\dag}V)<1 .$$
%\end{thm}

\begin{thm}\label{tcomp2}
Let $A=B-C$ be a proper weak regular splitting and $A=U-V$ be a
 proper regular   splitting of a semi-monotone matrix $A\in
{\R}^{m\times n}$. If $B^{\dag}\geq U^{\dag}$ and row sums of
$U^{\dag}$  are positive, then
$$ \rho(B^{\dag}C)\leq \rho(U^{\dag}V)<1 .$$
\end{thm}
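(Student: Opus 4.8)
The plan is to follow the structure of the previous theorem (Theorem \ref{tcomp1}) as closely as possible, since the two statements differ only in the hypothesis: there we assumed $A \geq 0$, and here we instead assume the row sums of $U^\dagger$ are positive. As before, by Theorem \ref{PRC} and Theorem \ref{PWRC} both $\rho(U^\dagger V) < 1$ and $\rho(B^\dagger C) < 1$, and because $\rho(U^\dagger V)$ and $\rho(B^\dagger C)$ are strictly increasing functions of $\rho(A^\dagger V)$ and $\rho(A^\dagger C)$ respectively (via Lemma \ref{lemqeigval} and Theorem \ref{PWRC2}(g)), it again suffices to prove the single inequality $\rho(A^\dagger V) \geq \rho(A^\dagger C)$. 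So the whole problem reduces to comparing these two spectral radii under the weakened hypothesis.

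The natural route is to exploit Theorem \ref{use}: if I can exhibit a strictly positive vector $x > 0$ with $A^\dagger C\, x \leq \rho(A^\dagger V)\, x$, then $\rho(A^\dagger C) \leq \rho(A^\dagger V)$ follows immediately. To build such an $x$, I would use the Perron-Frobenius eigenvector of $A^\dagger V$. Since $A = U - V$ is a proper regular splitting of a semi-monotone matrix, Theorem \ref{PRC} gives $A^\dagger \geq 0$, hence $A^\dagger V \geq 0$, and by Theorem \ref{frob} there is a non-negative eigenvector $x$ with $A^\dagger V\, x = \rho(A^\dagger V)\, x$. From $B^\dagger \geq U^\dagger$, together with the identities $A^\dagger V A^\dagger A = A^\dagger V$ and $A^\dagger C A^\dagger A = A^\dagger C$ (which rest on $A^\dagger A = P_{R(A^T)}$ and the range conditions of a proper splitting), I expect to derive the matrix inequality $A^\dagger V A^\dagger \geq A^\dagger C A^\dagger$ exactly as in the first theorem, the step where $B^\dagger \geq U^\dagger$ is converted into a comparison between $A^\dagger V A^\dagger$ and $A^\dagger C A^\dagger$ via the invertibility of $I + VA^\dagger$ and $I + A^\dagger C$.

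The role of the new hypothesis---positivity of the row sums of $U^\dagger$---is precisely to guarantee that the candidate Perron vector can be taken strictly positive, or more robustly, to manufacture a strictly positive test vector from the non-negative eigenvector $x$. Concretely, writing $e$ for the all-ones vector, positivity of the row sums means $U^\dagger e > 0$, and in the first theorem the hypothesis $A \geq 0$ played the analogous part of ensuring the relevant non-negative quantities did not degenerate. I would use $U^\dagger e > 0$ to show that the vector produced in the comparison is componentwise positive, so that Theorem \ref{use} applies cleanly rather than only yielding a non-strict or vacuous bound.

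The main obstacle I anticipate is exactly this last point: when $A \geq 0$ is dropped, the Perron eigenvector $x$ of $A^\dagger V$ is only guaranteed non-negative, and Theorem \ref{use} requires a \emph{strictly} positive vector. The delicate work is to show that the row-sum positivity of $U^\dagger$ forces either $A^\dagger V\, x > 0$ (so that $\rho(A^\dagger V)\, x = A^\dagger V\, x > 0$ gives $x > 0$ provided $\rho(A^\dagger V) > 0$), or to perturb $x$ appropriately while preserving the inequality $A^\dagger C\, x \leq \rho(A^\dagger V)\, x$. Handling the borderline case $\rho(A^\dagger V) = 0$, and verifying that the strict positivity genuinely transfers through the product $A^\dagger$, is where the argument needs care; the rest is a faithful adaptation of the computation already carried out in Theorem \ref{tcomp1}.
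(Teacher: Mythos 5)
Your opening reduction to $\rho(A^{\dag}V)\geq\rho(A^{\dag}C)$ is sound, and the matrix inequality $A^{\dag}VA^{\dag}\geq A^{\dag}CA^{\dag}$ does indeed survive the loss of $A\geq 0$, since its derivation in Theorem \ref{tcomp1} only needs $I+VA^{\dag}\geq 0$ and $I+A^{\dag}C\geq 0$. The first genuine gap is what comes next: you never connect this matrix inequality to the vector inequality $A^{\dag}Cx\leq\rho(A^{\dag}V)x$ that Theorem \ref{use} requires. In Theorem \ref{tcomp1} that connection was made by post-multiplying $A^{\dag}VA^{\dag}\geq A^{\dag}CA^{\dag}$ by $A$; post-multiplication of an inequality by $A$ is legitimate only because $A\geq 0$ there, and that is exactly the hypothesis you have discarded. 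The identities $A^{\dag}VA^{\dag}A=A^{\dag}V$ and $A^{\dag}CA^{\dag}A=A^{\dag}C$ that you cite are true as identities, but they cannot be pushed across an inequality sign. What is needed instead is a single nonnegative \emph{vector} to multiply by: for the Perron vector $x\geq 0$ of $A^{\dag}V$ with $\rho:=\rho(A^{\dag}V)>0$, one has $x\in R(A^{T})$, and pre-multiplying $A^{\dag}Vx=\rho x$ by $A$ gives $Vx=\rho Ax$, hence $Ax=\rho^{-1}Vx\geq 0$ because $V\geq 0$; only then may the inequality $A^{\dag}VA^{\dag}\geq A^{\dag}CA^{\dag}$ be applied to $Ax$ to yield $\rho x=A^{\dag}Vx\geq A^{\dag}Cx$. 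This observation---that $Ax$ is nonnegative even though $A$ is not---is the real engine of the argument and is absent from your proposal. The paper in fact dispenses with the $A^{\dag}$-detour altogether: it takes the Perron vector $x$ of $U^{\dag}V$ itself, computes $Ax=\bigl(\frac{1}{\rho(U^{\dag}V)}-1\bigr)Vx\geq 0$, multiplies the hypothesis $B^{\dag}\geq U^{\dag}$ by the nonnegative vector $Ax$, and uses $B^{\dag}Bx=U^{\dag}Ux=x$ to obtain $B^{\dag}Cx\leq U^{\dag}Vx=\rho(U^{\dag}V)x$ directly.

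The second gap is strict positivity, which you flag as ``delicate work'' but leave open, and the repairs you float do not hold up. Positivity of the row sums of $U^{\dag}$ does not force $A^{\dag}Vx>0$: a nonnegative matrix with no zero row can still have a Perron vector with zero components (any reducible example, e.g.\ a diagonal matrix, shows this), so no such pointwise upgrade of the unperturbed eigenvector is available. The paper's mechanism---and the only place the row-sum hypothesis is actually used---is a perturbation of the \emph{splitting}: replace $A$ by $A-\epsilon J$ and $V$ by $V+\epsilon J$, where $J$ is the all-ones matrix. The perturbed iteration matrix is $U^{\dag}V+\epsilon U^{\dag}J$, and $U^{\dag}J>0$ holds precisely when the row sums of $U^{\dag}$ are positive; a strictly positive matrix has a strictly positive Perron vector, so the comparison runs for every $\epsilon>0$ and one concludes by Theorem \ref{use} and continuity of the spectral radius as $\epsilon\to 0^{+}$. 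Without some such device, your argument produces the inequality $A^{\dag}Cx\leq\rho(A^{\dag}V)x$ only for a vector $x\geq 0$ that may have zero entries, and Theorem \ref{use} simply does not apply.
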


\begin{proof}
We have $ \rho(U^{\dag}V)<1$ and $\rho(B^{\dag}C)<1$,  by Theorem
\ref{PRC} and  Theorem \ref{PWRC}, respectively. As $U^{\dag}V\geq
0$, by Theorem \ref{frob}, there exists $x\geq 0$ such that
$U^{\dag}Vx=\rho(U^{\dag}V) x$. So $x\in R(U^T)=R(B^T)$. Therefore
$Ux=\frac{1}{\rho(U^{\dag}V)}UU^{\dag}Vx=\frac{1}{\rho(U^{\dag}V)}Vx$.
Now
$Ax=(U-V)x=U(I-U^{\dag}V)x=(1-\rho(U^{\dag}V))Ux=(\frac{1}{\rho(U^{\dag}V)}-1)Vx\geq
0$ as $V\geq 0$ and $\rho(U^{\dag}V) < 1$. Then the condition
$B^{\dag}\geq U^{\dag}$ yields $B^{\dag}Ax\geq U^{\dag}Ax$, i.e.,
$B^{\dag}(B-C)x \geq U^{\dag}(U-V)x$ which in turn implies that
$x-B^{\dag}Cx\geq x-U^{\dag}Vx$. Hence $B^{\dag}Cx\leq
U^{\dag}Vx=\rho(U^{\dag}V)x$. By replacing $A$ by $A-\epsilon J$ and
$V$ by $V+\epsilon J$, where all the entries of $J$ are 1, and
$\epsilon$ is a small positive real number, we can assume that
$x>0$. Thus $ \rho(B^{\dag}C)\leq \rho(U^{\dag}V)<1,$ by Theorem
\ref{use}.
\end{proof}

We remark that the above result is also true if we replace the
condition `row sums of $U^{\dag}$ are positive' by `no row of
$U^{\dag}$ is zero' as  the conditions `$U^{\dag}\geq 0$' and `no
row of $U^{\dag}$ is zero' yield `row sums of $U_{2}^{\dag}$ are
positive'.  The above proof adopts a similar technique as in the
proof of Lemma (Section 3, \cite{els}). Notice that
$U^{\dag}(V+\epsilon J) > 0$ may not be possible always unless row
sums of $U^{\dag}$  are positive. Hence we have assumed the
condition row sums of $U^{\dag}$ are positive. This fact is shown
through an example below.

\begin{ex}
Let $A= \begin{bmatrix}
                                                 \begin{array}{ccc}
                                                        0 & 2 & 1\\
                                                         0& 4 & 2\\
                                                        \end{array}
                                                        \\
                                                        \end{bmatrix}=\begin{bmatrix}
                                                 \begin{array}{ccc}
                                                        0 & 4 & 2\\
                                                         0 & 8  & 4\\
                                                        \end{array}
                                                        \\
                                                        \end{bmatrix}-\begin{bmatrix}
                                                 \begin{array}{ccc}
                                                        0 & 2 & 1\\
                                                         0 & 4  & 2\\
                                                        \end{array}
                                                        \\
                                                        \end{bmatrix}$.
                                                        We have
                                                        $R(U)=R(A)$,
                                                        $N(U)=N(A)$,
                                                        $V\geq 0$
                                                        and $U^{\dag}=\begin{bmatrix}
                                                 \begin{array}{cc}
                                                         0 & 0\\
                                                         1/25 & 2/25\\
                                                        1/50 & 1/25\\
                                                        \end{array}
                                                        \end{bmatrix}\geq
                                                        0$.
                                                         Hence $A=U-V$
                                                        is a
                                                        proper
                                                        regular
                                                        splitting.  But for $\epsilon = 0.01$,
                                                         we have $U^{\dag}(V+\epsilon J)=\begin{bmatrix}
                                                 \begin{array}{ccc}
                                                        0 & 0 & 0\\
                                                        3/2500 & 1003/2500 & 503/2500\\
                                                        3/5000 & 535/2667 & 218/2167\\
                                                        \end{array}
                                                        \end{bmatrix}\geq
                                                        0$.
\end{ex}

One can use the comparison results to pick the best splitting among
any finite number of splittings. However, the major drawback of this
theory is the following: {\it it is time consuming and needs many
computation.} To avoid this situation and to get a finer process, we
now proceed to introduce  alternating iteration scheme for
rectangular matrices replacing the ordinary matrix inverse by
Moore-Penrose inverse, and then discuss its convergence theory.

\section{Application to Convergence of Alternating  Iterations}
Let $A=M-N=U-V$ be two proper splittings of $A\in {\R}^{m \times
n}$. We now propose
\begin{eqnarray}\label{eq2} x^{i+1/2}=M^{\dag}Nx^{i}+M^{\dag}b,
~~~~x^{i+1}=U^{\dag}Vx^{i+1/2}+U^{\dag}b, ~~~i=0,1,2,\cdots,
\end{eqnarray}
 as the general class of iterative method for finding the solution of
 (\ref{eq0}) with the initial approximation $x^0$. In case
 nonsingular $M$ and $U$, the above equation reduces to equation (8)
 of section 3, \cite{benz} (i.e., equation (\ref{AIS0}) of this paper). Not only that many well-known methods
 belong to such a class, and are also discussed in the same section of  \cite{benz}.

 In order to study convergence of the above scheme, we construct a
 single splitting $A=B-C$ associated with the iteration matrix by
 eliminating $x^{i+1/2}$ from (\ref{eq2}). So, we have
\begin{eqnarray}\label{eq3}
~~~~x^{i+1}=U^{\dag}V M^{\dag}N x^{i}+U^{\dag}(VM^{\dag}+I)b,
~~~i=0,1,2,\cdots,
\end{eqnarray}
where $H=U^{\dag}V M^{\dag}N$ is the iteration matrix of the new
iterative scheme (\ref{eq3}).

Recall that the convergence of the individual splittings $A=M-N$ and
$A=U-V$ does not imply the convergence of the alternating iterative
scheme (\ref{eq3}). Example 3.1, \cite{benz} is in this direction,
and  is obtained below for the sack of completeness and ready
reference.

\begin{ex}(Example 3.1, \cite{benz})\\
Let $A= \begin{bmatrix}
                                                 \begin{array}{cc}
                                                        2 & -1\\
                                                         -1 & 2\\
                                                        \end{array}
                                                        \\
                                                        \end{bmatrix}$,
                                                        $M= \begin{bmatrix}
                                                 \begin{array}{cc}
                                                        2 & 1\\
                                                         -1 & 1\\
                                                        \end{array}
                                                        \\
                                                        \end{bmatrix}$
                                                        and $U= \begin{bmatrix}
                                                 \begin{array}{cc}
                                                        1 & -1\\
                                                         1 & 2\\
                                                        \end{array}
                                                        \\
                                                        \end{bmatrix}$.
                                                        Then
                                                        $A=M-N=U-V$
                                                        are two
                                                        convergent
                                                        proper
                                                        splittings,
                                                        but  $\rho(H)=\rho(U^{\dag}V M^{\dag}N)=1.$
\end{ex}

Convergence of the  iteration scheme  (\ref{eq3}) is addressed in
the next result.

\begin{thm}\label{Main13}
Let $A=M-N=U-V$ be two proper weak regular splittings of a
semi-monotone matrix $A\in {\R}^{m\times n}$. Then $\rho
(H)=\rho(U^{\dag}V M^{\dag}N)<1.$
\end{thm}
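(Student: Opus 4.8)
The plan is to reduce the product iteration to a \emph{single} proper weak regular splitting $A=B-C$ whose iteration matrix is exactly $H$, and then to invoke Theorem \ref{PWRC}, which converts the hypothesis $A^{\dag}\ge 0$ directly into $\rho(B^{\dag}C)<1$. Since both splittings are proper, $R(M)=R(U)=R(A)$ and $N(M)=N(U)=N(A)$, so $MM^{\dag}=UU^{\dag}=AA^{\dag}=P_{R(A)}$ and $M^{\dag}M=U^{\dag}U=A^{\dag}A=P_{R(A^{T})}$; write $P:=A^{\dag}A$. I would first record $M^{\dag}N=P-M^{\dag}A$ and $U^{\dag}V=P-U^{\dag}A$, together with $PM^{\dag}=M^{\dag}$, $AP=A$, and $U^{\dag}AP=U^{\dag}A$ (the last three follow from $R(M^{\dag})=R(M^{T})=R(A^{T})$ and $AA^{\dag}A=A$).

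Next I would form the product. Using these identities,
\[ H=U^{\dag}VM^{\dag}N=(P-U^{\dag}A)(P-M^{\dag}A)=P-M^{\dag}A-U^{\dag}A+U^{\dag}AM^{\dag}A, \]
so that $P-H=(U^{\dag}+M^{\dag}-U^{\dag}AM^{\dag})A=U^{\dag}(I+VM^{\dag})A$. This suggests setting $B^{\dag}:=U^{\dag}(I+VM^{\dag})=U^{\dag}+U^{\dag}VM^{\dag}$, $B:=(B^{\dag})^{\dag}$, and $C:=B-A$, giving $B^{\dag}A=P-H$. The weak regular bookkeeping is then immediate: $B^{\dag}=U^{\dag}+(U^{\dag}V)M^{\dag}\ge 0$ because $U^{\dag}\ge 0$, $U^{\dag}V\ge 0$, and $M^{\dag}\ge 0$; and, \emph{provided} $A=B-C$ is a proper splitting, $B^{\dag}B=P_{R(A^{T})}=P$, whence $B^{\dag}C=B^{\dag}B-B^{\dag}A=P-(P-H)=H\ge 0$. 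Thus $A=B-C$ would be a proper weak regular splitting with iteration matrix $H$, and Theorem \ref{PWRC} applied to it (using $A^{\dag}\ge 0$) yields $\rho(H)=\rho(B^{\dag}C)<1$ at once. Notice the strict inequality is then free, which is exactly why I favour this reduction over a direct Perron--Frobenius argument on $H$ (where $H\ge 0$ with a nonnegative eigenvector from Theorem \ref{frob} gives only $\rho(H)\le 1$ without an extra perturbation step, as in Theorem \ref{tcomp2}).

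The one nontrivial point, and what I expect to be the main obstacle, is the \emph{properness} of $A=B-C$, i.e.\ $R(B)=R(A)$ and $N(B)=N(A)$. Since $B=(B^{\dag})^{\dag}$, these amount to $R(B^{\dag})=R(A^{T})$ and $N(B^{\dag})=R(A)^{\perp}$, and unwinding $B^{\dag}=U^{\dag}(I+VM^{\dag})$ shows each is equivalent to the invertibility of $I+VM^{\dag}$ as a map $R(A)\to R(A)$ (it already acts as the identity on $R(A)^{\perp}$), equivalently to $-1$ not being an eigenvalue of $M^{\dag}V$, equivalently to the injectivity of $M+V=U+N$ on $R(A^{T})$. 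In the square nonsingular case of Benzi and Szyld this invertibility is automatic, but in the rectangular setting it must be argued, and it does \emph{not} follow from the single-splitting trick in the remark after Theorem \ref{proper} (which gives invertibility of $I+VA^{\dag}$, not of the mixed operator $I+VM^{\dag}$). The hard part will therefore be to prove this invertibility using that both splittings are proper weak regular and that $A^{\dag}\ge 0$; I would attempt it by analysing $N(M+V)\cap R(A^{T})$ directly, showing that $(U+N)u=0$ with $u\in R(A^{T})$ forces $u=-U^{\dag}Nu$ and hence a spectral obstruction. Once this invertibility lemma is established, the reduction above closes the proof.
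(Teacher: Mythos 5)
Your reduction to Theorem \ref{PWRC} is sound \emph{modulo} the properness of the induced splitting $A=B-C$, but that step --- which you correctly single out as ``the hard part'' --- is a genuine gap, not a technicality, and your sketch for closing it cannot work. From $(U+N)u=0$, $u\in R(A^{T})$, you get $u=-U^{\dag}Nu$, i.e.\ $-1$ is an eigenvalue of the \emph{mixed} product $U^{\dag}N$; the hypotheses control $U^{\dag}V$, $M^{\dag}N$ and $U^{\dag}VM^{\dag}$, but give no sign or spectral information about $U^{\dag}N$ (or about $M^{\dag}V$), and even a nonnegative matrix can have $-1$ as an eigenvalue unless its spectral radius is already known to be below one. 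Combining your two relations only yields that $1$ is an eigenvalue of the crossed product $M^{\dag}VU^{\dag}N$, which is equally uncontrolled. In fact the only visible source of the needed invertibility is $\rho(H)<1$ itself: once the theorem is known, $I-H$ is invertible and $B=\bigl((I-H)A^{\dag}\bigr)^{\dag}$ has the right range and null space. This is precisely how the paper treats the induced splitting: its Theorem \ref{Main23} \emph{assumes} $R(M+U-A)=R(A)$ and $N(M+U-A)=N(A)$ as extra hypotheses rather than deriving them, and its proof invokes Theorem \ref{Main13}. So, as it stands, your deferred lemma is essentially downstream of the theorem you are trying to prove, and the reduction is circular.

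The frustrating part is that you have already derived everything the paper actually uses, and then turned away from it for a mistaken reason. The paper's proof is not a Perron--Frobenius eigenvector argument (which, as you say, would only give $\rho(H)\le 1$); it works directly from your two facts $H\ge 0$ and $(I-H)A^{\dag}=U^{\dag}+U^{\dag}VM^{\dag}\ge 0$ (your $B^{\dag}$, since $B^{\dag}A=P-H$ is the same identity as $(I-H)A^{\dag}=B^{\dag}$). Namely, for every $m$, $0\le (I+H+\cdots+H^{m})(I-H)A^{\dag}=(I-H^{m+1})A^{\dag}\le A^{\dag}$, so the partial sums $S_{m}=\sum_{k=0}^{m}H^{k}$ applied to $(I-H)A^{\dag}$ are nondecreasing and bounded; since $(I-H)A^{\dag}\ge U^{\dag}$ and $HA^{\dag}A=H$, this bounds the $S_{m}$ themselves, and a nonnegative matrix whose Neumann partial sums are uniformly bounded satisfies $H^{m}\to 0$, i.e.\ $\rho(H)<1$ \emph{strictly} --- no perturbation step, no induced splitting, no appeal to Theorem \ref{PWRC}. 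Replacing your unproved invertibility lemma by this bounded-partial-sums argument turns your write-up into a complete proof; keeping the reduction to Theorem \ref{PWRC} would instead require an independent proof of properness of $B$, which neither you nor the paper supplies.
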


\begin{proof}
We have $H=U^{\dag}V M^{\dag}N=U^{\dag}(U-A) M^{\dag}(M-A)=
U^{\dag}U-U^{\dag}A-M^{\dag}A+U^{\dag}AM^{\dag}A.$ Since $A=M-N=U-V$
are two proper splittings, so $R(U)=R(M)=R(A)$ and $N(U)=N(M)=N(A)$.
Hence $M^{\dag}M=U^{\dag}U=A^{\dag}A$. We then have
$H=U^{\dag}U-U^{\dag}A-M^{\dag}A+U^{\dag}AM^{\dag}A$. Again,
$U^{\dag}AM^{\dag}=U^{\dag}(U-V)M^{\dag}=U^{\dag}UM^{\dag}-U^{\dag}VM^{\dag}=
M^{\dag}MM^{\dag}-U^{\dag}VM^{\dag}=M^{\dag}-U^{\dag}VM^{\dag}.$ But
$U^{\dag}VM^{\dag}\geq 0$ as $A=M-N=U-V$ are two proper weak regular
splittings. So $M^{\dag}\geq M^{\dag}-U^{\dag}VM^{\dag} \geq
U^{\dag}AM^{\dag}$ which results $(I-U^{\dag}A)M^{\dag} \geq 0$. The
condition $U^{\dag}\geq 0$ yields $U^{\dag}+(I-U^{\dag}A)M^{\dag}=
U^{\dag}+M^{\dag}-U^{\dag}AM^{\dag} \geq 0$. This  implies
$A^{\dag}-U^{\dag}-M^{\dag}+U^{\dag}AM^{\dag}\leq A^{\dag}$ which
 can be rewritten as
$U^{\dag}UA^{\dag}-U^{\dag}AA^{\dag}-M^{\dag}AA^{\dag}+U^{\dag}AM^{\dag}AA^{\dag}\leq
A^{\dag}$. We then have
$(U^{\dag}U-U^{\dag}A-M^{\dag}A+U^{\dag}AM^{\dag}A)A^{\dag}\leq
A^{\dag}$, i.e., $HA^{\dag}\leq A^{\dag}$. Thus $(I-H)A^{\dag}\geq
0$.

As $H\geq 0$, we  have $0\leq (I+H+H^2+H^3+\cdots
+H^m)(I-H)A^{\dag}= (I-H^{m+1})A^{\dag}\leq A^{\dag}$ for each $m\in
\mathbb N$. So, the partial sums of the series $ \displaystyle
\sum_{m=0}^{\infty} H^m$ is uniformly bounded. Hence $\rho (H)<1$.
\end{proof}

Next example shows that the converse of Theorem \ref{Main13} is not
true.

\begin{ex}
Let $A= \begin{bmatrix}
                                                 \begin{array}{ccc}
                                                        1 & 0 & 1\\
                                                         0& 1 & 1\\
                                                        \end{array}
                                                        \\
                                                        \end{bmatrix}$,
                                                         $M=\begin{bmatrix}
                                                 \begin{array}{ccc}
                                                        4 & 0 & 4\\
                                                         2 & 2  & 4\\
                                                        \end{array}
                                                        \\
                                                        \end{bmatrix}$
                                                        and $U=\begin{bmatrix}
                                                 \begin{array}{ccc}
                                                        2 & 0 & 2\\
                                                         1 & 2  & 3\\
                                                        \end{array}
                                                        \\
                                                        \end{bmatrix}$.
                                                        Then
                                                        $A=M-N=U-V$
                                                        are two
                                                        proper
                                                        splittings.
                                                        Also $\rho (H)=\rho(U^{\dag}V
M^{\dag}N)=3/8<1$. But $M^{\dag}=\begin{bmatrix}
                                                 \begin{array}{cc}
                                                        1/4 & -1/6\\
                                                         -1/4 & 1/3\\
                                                        0 & 1/6
                                                        \end{array}
                                                        \end{bmatrix}\ngeq 0$  and $U^{\dag}=\begin{bmatrix}
                                                 \begin{array}{cc}
                                                        5/12 & -1/6\\
                                                         -1/3 & 1/3\\
                                                        1/12 & 1/6
                                                        \end{array}
                                                        \end{bmatrix}\ngeq 0$, i.e.,  $A=M-N=U-V$ are not proper weak regular splittings.
\end{ex}

It is of interest to know  the type of splitting $B-C$ of $A$ that
yields the iterative scheme (\ref{eq3})(i.e.,
$x^{i+1}=Hx^{i}+B^{\dag}b$ with $H=B^{\dag}C$). This can be restated
as what can we say about the type of the induced splitting $A=B-C$
which is induced by $H=U^{\dag}V M^{\dag}N$. The same problem is
settled partially by the next result under the assumptions of a few
conditions.

\begin{thm}\label{Main23}
Let $A=M-N=U-V$ be two proper weak regular splittings of a
semi-monotone matrix $A\in {\R}^{m\times n}$. Then the unique
splitting $A=B-C$ induced by $H$ with $B=M(M+U-A)^{\dag}U$ is a
proper weak regular splitting if $R(M+U-A)=R(A)$ and
$N(M+U-A)=N(A)$.
\end{thm}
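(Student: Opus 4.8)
The plan is to verify the three defining properties of a proper weak regular splitting for $B=M(M+U-A)^{\dag}U$ and $C=B-A$: that $A=B-C$ is proper, that $B^{\dag}\geq 0$, and that $B^{\dag}C=H\geq 0$. Throughout I write $W=M+U-A$, so the hypotheses read $R(W)=R(A)$ and $N(W)=N(A)$. Since $A=M-N=U-V$ are proper, the matrices $M,U,W$ and $A$ all share the range $R(A)$ and the null space $N(A)$; hence they share both orthogonal projectors, $MM^{\dag}=UU^{\dag}=WW^{\dag}=AA^{\dag}=P_{R(A)}$ and $M^{\dag}M=U^{\dag}U=W^{\dag}W=A^{\dag}A=P_{R(A^{T})}$. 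These common projectors are the engine of every computation below.

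The key and hardest step is to establish the reverse-order formula
\[
B^{\dag}=U^{\dag}WM^{\dag}.
\]
In general $(MW^{\dag}U)^{\dag}\neq U^{\dag}WM^{\dag}$, and it is exactly the coincidence of ranges and null spaces that rescues it. I would verify the four Penrose equations for $X=U^{\dag}WM^{\dag}$ directly, repeatedly collapsing products via the identities $P_{R(A)}W=W$, $WP_{R(A^{T})}=W$, $U^{\dag}P_{R(A)}=U^{\dag}$, $P_{R(A^{T})}M^{\dag}=M^{\dag}$. For instance $BX=MW^{\dag}(UU^{\dag})WM^{\dag}=MW^{\dag}WM^{\dag}=MP_{R(A^{T})}M^{\dag}=MM^{\dag}=P_{R(A)}$ and, symmetrically, $XB=P_{R(A^{T})}$; then $BXB=P_{R(A)}B=B$ and $XBX=X$ follow because $R(B)\subseteq R(A)$ and $R(X)\subseteq R(A^{T})$. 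Since $P_{R(A)}$ and $P_{R(A^{T})}$ are symmetric, all four conditions hold and the formula for $B^{\dag}$ is confirmed. I expect this verification to be the \textbf{main obstacle}, since it is precisely where the two extra hypotheses $R(W)=R(A)$ and $N(W)=N(A)$ are indispensable.

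With $B^{\dag}=U^{\dag}WM^{\dag}$ in hand I would expand $B^{\dag}=U^{\dag}(M+U-A)M^{\dag}=U^{\dag}MM^{\dag}+U^{\dag}UM^{\dag}-U^{\dag}AM^{\dag}$. The first two terms collapse to $U^{\dag}$ and $M^{\dag}$ by the projector identities, while the identity $U^{\dag}AM^{\dag}=M^{\dag}-U^{\dag}VM^{\dag}$ already derived in the proof of Theorem \ref{Main13} gives
\[
B^{\dag}=U^{\dag}+U^{\dag}VM^{\dag}.
\]
Both summands are nonnegative, since $U^{\dag}\geq 0$ and $U^{\dag}V\geq 0$, $M^{\dag}\geq 0$ (the splittings being proper weak regular), so $B^{\dag}\geq 0$. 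Properness is then immediate from the computation above: $BB^{\dag}=P_{R(A)}$ forces $R(B)=R(A)$, and $B^{\dag}B=P_{R(A^{T})}$ forces $N(B)=N(A)$. (Uniqueness of the induced splitting is not an extra burden: Theorem \ref{proper}(c) gives $B^{\dag}=(I-H)A^{\dag}$, which pins down $B$.)

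It remains to identify the induced iteration matrix and check its sign, i.e. that $B^{\dag}C=H$ with $H\geq 0$. I would compute $B^{\dag}A=U^{\dag}A+U^{\dag}VM^{\dag}A$, substitute $U^{\dag}A=P_{R(A^{T})}-U^{\dag}V$ and $M^{\dag}A=P_{R(A^{T})}-M^{\dag}N$, and use $VP_{R(A^{T})}=V$, which holds because $N(A)\subseteq N(V)$ (for $x\in N(A)=N(U)$ one has $Vx=Ux-Ax=0$). The cross term then telescopes to $U^{\dag}V-U^{\dag}VM^{\dag}N$, leaving $B^{\dag}A=P_{R(A^{T})}-U^{\dag}VM^{\dag}N=P_{R(A^{T})}-H$. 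Hence $B^{\dag}C=B^{\dag}B-B^{\dag}A=P_{R(A^{T})}-(P_{R(A^{T})}-H)=H$, confirming that $A=B-C$ is exactly the splitting induced by $H$. Finally $H=(U^{\dag}V)(M^{\dag}N)\geq 0$ as a product of two nonnegative matrices, so $B^{\dag}C=H\geq 0$. Together with properness and $B^{\dag}\geq 0$, this shows $A=B-C$ is a proper weak regular splitting, completing the proof.
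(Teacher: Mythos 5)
Your proof is correct, and its core coincides with the paper's: both hinge on identifying the Moore--Penrose pair $B=M(M+U-A)^{\dag}U$, $B^{\dag}=U^{\dag}(M+U-A)M^{\dag}$ via the Penrose equations and the shared projectors $P_{R(A)}$, $P_{R(A^{T})}$ (the paper runs this verification in the opposite direction, starting from $B^{\dag}:=U^{\dag}(VM^{\dag}+I)$ and showing $(B^{\dag})^{\dag}=M(M+U-A)^{\dag}U$), and both finish with the same expansion $B^{\dag}=U^{\dag}+U^{\dag}VM^{\dag}\geq 0$ and $B^{\dag}C=U^{\dag}VM^{\dag}N\geq 0$. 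Where you genuinely diverge, to your advantage, is in the middle third. First, you read off properness directly from $BB^{\dag}=P_{R(A)}$ and $B^{\dag}B=P_{R(A^{T})}$, already computed in the Penrose verification, whereas the paper proves $N(B)=N(U)$ and $N(B^{T})=N(M^{T})$ by elementwise kernel-chasing. Second, and more substantially, you identify the induced iteration matrix by the purely algebraic computation $B^{\dag}A=P_{R(A^{T})}-H$ (using $VP_{R(A^{T})}=V$, justified since $N(A)\subseteq N(V)$), so $B^{\dag}C=H$ follows with no spectral input; the paper instead invokes Theorem \ref{Main13} to get $\rho(H)<1$, sets $B=A(I-H)^{-1}$, and runs a second Penrose-equation verification to reconcile this with $M(M+U-A)^{\dag}U$. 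Your route therefore makes Theorem \ref{Main23} logically independent of Theorem \ref{Main13}, a small but real structural gain. Third, your uniqueness argument via Theorem \ref{proper}(c) (any proper splitting $A=\bar{B}-\bar{C}$ with $\bar{B}^{\dag}\bar{C}=H$ forces $\bar{B}^{\dag}=(I-H)A^{\dag}$, hence $\bar{B}=B$) is equivalent to, and slightly cleaner than, the paper's manipulation $\bar{B}H=\bar{C}=\bar{B}-A$; both establish uniqueness within the class of proper splittings, which is what the theorem intends.
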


\begin{proof}
From equation (\ref{eq3}), we have $B^{\dag}=U^{\dag}(VM^{\dag}+I)$.
By substituting $V=U-A$, we get
$B^{\dag}=U^{\dag}+U^{\dag}UM^{\dag}-U^{\dag}AM^{\dag}=
U^{\dag}MM^{\dag}+U^{\dag}UM^{\dag}-U^{\dag}AM^{\dag}=U^{\dag}(M+U-A)M^{\dag}$.
Since $R(M+U-A)=R(A)$, $N(M+U-A)=N(A)$ and $A=M-N=U-V$ are proper
splittings,  we have
$(M+U-A)(M+U-A)^{\dag}=P_{R(M+U-A)}=P_{R(A)}=P_{R(U)}=P_{R(M)}$ and
$(M+U-A)^{\dag}(M+U-A)=P_{R((M+U-A)^T)}=P_{R(A^T)}=P_{R(M^T)}=P_{R(U^T)}.$
Let $X=M(M+U-A)^{\dag}U$, then $B^{\dag}X=
U^{\dag}(M+U-A)M^{\dag}M(M+U-A)^{\dag}U=U^{\dag}P_{R(U)}U=U^{\dag}U.$
So $B^{\dag}X$ is symmetric and $B^{\dag}X B^{\dag}=B^{\dag}$.
Similarly, it can be shown that $XB^{\dag}$ is symmetric and
$XB^{\dag}X=X$. Hence $X=(B^{\dag})^{\dag}=B=M(M+U-A)^{\dag}U$.

Next to show that $R(B)=R(A)$ and $N(B)=N(A)$.  First, we prove that
$N(U)=N(A)=N(B)$. Clearly, $N(U)\subseteq N(B)$. Let $Bx=0$.
Pre-multiplying $M^{\dag}$ to $Bx=0$ and using
$M^{\dag}M=P_{R(A^T)}=P_{R((M+U-A)^T)}$, we obtain
$(M+U-A)^{\dag}Ux=0.$ Again, pre-multiplying $(M+U-A)$ and using the
fact $(M+U-A)(M+U-A)^{\dag}=P_{R(M+U-A)}=P_{R(U)}$, we get $x\in
N(U)$. So $N(B)\subseteq N(U)$.  We   next have to prove that
$R(A)=R(B)$, i.e., $N(M^T)=N(A^T)=N(B^T)$. Since
$B=M(M+U-A)^{\dag}U$, so $N(M^T)\subseteq N(B^T)$. Hence we need to
show the other way, i.e., $N(B^T)\subseteq N(M^T)$. Let $x \in
N(B^T)$. Then $(M(M+U-A)^{\dag}U)^T x=0$. Pre-multiplying
$(U^{\dag})^T$, we get $(UU^{\dag})^T [(M+U-A)^{\dag} ]^T M^T x=0$,
i.e., $x^T M (M+U-A)^{\dag}  UU^{\dag}=x^T M (M+U-A)^{\dag}=0$.
Again, post-multiplying $(M+U-A)$, we get $x^T MM^{\dag}M=0$. Thus
 $M^T x=0$, i.e., $N(B^T)\subseteq N(M^T)$.

We have
$B^{\dag}=M^{\dag}+U^{\dag}-U^{\dag}AM^{\dag}=A^{\dag}-U^{\dag}UM^{\dag}MA^{\dag}+U^{\dag}UM^{\dag}AA^{\dag}
+U^{\dag}AM^{\dag}MA^{\dag}-U^{\dag}AM^{\dag}AA^{\dag}=A^{\dag}-
(U^{\dag}UM^{\dag}-U^{\dag}AM^{\dag})(MA^{\dag}-AA^{\dag})=A^{\dag}-
(U^{\dag}(U-A)M^{\dag}(M-A)A^{\dag})=A^{\dag}-U^{\dag}VM^{\dag}NA^{\dag}=(I-H)A^{\dag}.$
Next to prove that $A=B-C$ is a proper splitting, i.e., to show that
$A=B-C$, $R(B)=R(A)$ and $N(B)=N(A)$. We have already shown the last
two conditions, so we have to prove only $A=B-C$. By Theorem
\ref{Main13}, we have $\rho(H)<1$ and so $I-H$ is invertible. Let
$X=A(I-H)^{-1}$. Then $XB^{\dag}=AA^{\dag}$ which results
$XB^{\dag}$ is symmetric and $XB^{\dag}X=X$. Again
$B^{\dag}X=(I-H)A^{\dag}A(I-H)^{-1}=(A^{\dag}A-HA^{\dag}A)(I-H)^{-1}=(A^{\dag}A-A^{\dag}AH)(I-H)^{-1}=A^{\dag}A$
which yields $B^{\dag}X$ is symmetric and
$B^{\dag}XB^{\dag}=A^{\dag}A(I-H)A^{\dag}=(I-H)A^{\dag}AA^{\dag}=B^{\dag}$.
Hence $X=(B^{\dag})^{\dag}=B=A(I-H)^{-1}$ and $C=B-A$. Now
$B^{\dag}C=B^{\dag}B-B^{\dag}A=B^{\dag}B -(I-H)A^{\dag}A=H$. Thus
$A=B-C$ is a proper splitting. Next, we have to prove that the
proper splitting $A=B-C$ is unique. Suppose that there exists
another induced splitting $A=\bar{B}-\bar{C}$ such that
$H=\bar{B}^{\dag}\bar{C}$. Then
$\bar{B}H=\bar{B}\bar{B}^{\dag}\bar{C}=\bar{C}=\bar{B}-A$.  So
$\bar{B}=A+\bar{B}H$, i.e., $\bar{B}(I-H)=A$. This reveals that
$\bar{B}=A(I-H)^{-1}=B$ and therefore, $H$ induces the unique proper
splitting $A=B-C$.

Finally, $B^{\dag}=U^{\dag}+U^{\dag}UM^{\dag}-U^{\dag}AM^{\dag}=
U^{\dag}+ (M^{\dag}-U^{\dag}AM^{\dag})=U^{\dag}+U^{\dag}VM^{\dag}\
\geq 0$ since $A=M-N=U-V$ are proper weak regular splittings and
$M^{\dag}-U^{\dag}AM^{\dag}= U^{\dag}VM^{\dag}$. Also
$B^{\dag}C=U^{\dag}V M^{\dag}N\geq 0.$ Hence $A=B-C$ with
$B=M(M+U-A)^{\dag}U$ is a proper weak regular splitting.
\end{proof}

Another question comes to picture now, i.e., among these splittings
which will converge faster. More specifically, we want to know the
rate of convergence of  the induced splitting for the iterative
scheme (\ref{eq3}). If the induced splitting $A=B-C$ will not
converge faster than the individual splittings $A=M-N$ and $A=U-V$,
then the proposed alternating iteration method will not be useful.
In this direction, we next present a result which compares the rate
of convergence  of the induced splitting with the individual
splitting.

\begin{thm}\label{Main33}
Let $A\in {\R}^{m\times n}$ and $A\geq 0$. Let $A=M-N=U-V$ be two
proper regular splittings of a semi-monotone matrix $A$ such that
$R(M+U-A)=R(A)$ and $N(M+U-A)=N(A)$. Then $\rho (H)\leq \mbox{min}
\{\rho(U^{\dag}V), \rho(M^{\dag}N)\} < 1$, where $H=U^{\dag}V
M^{\dag}N$.
\end{thm}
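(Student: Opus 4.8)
The plan is to identify $H=U^{\dag}VM^{\dag}N$ with the iteration matrix $B^{\dag}C$ of the single induced splitting $A=B-C$ supplied by Theorem \ref{Main23}, and then to compare that induced splitting with each of the two given proper regular splittings using the comparison result Theorem \ref{tcomp1}. Since a proper regular splitting is automatically a proper weak regular splitting, all the hypotheses of Theorems \ref{Main13} and \ref{Main23} hold (the range and null-space conditions on $M+U-A$ are assumed here). In particular, Theorem \ref{Main13} already gives $\rho(H)<1$, while Theorem \ref{Main23} produces the unique proper weak regular splitting $A=B-C$ with $B=M(M+U-A)^{\dag}U$, $B^{\dag}=U^{\dag}(VM^{\dag}+I)$ and $B^{\dag}C=H$.

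The decisive step is to establish the two domination inequalities $B^{\dag}\geq U^{\dag}$ and $B^{\dag}\geq M^{\dag}$. For the first, I would expand $B^{\dag}=U^{\dag}+U^{\dag}VM^{\dag}$; because $U^{\dag}\geq 0$, $V\geq 0$ and $M^{\dag}\geq 0$ for two proper regular splittings, the term $U^{\dag}VM^{\dag}$ is nonnegative and hence $B^{\dag}\geq U^{\dag}$. For the second, I would recast the same quantity in the symmetric form $B^{\dag}=M^{\dag}+U^{\dag}NM^{\dag}$, which rests on the projection identities $U^{\dag}U=M^{\dag}M=P_{R(A^{T})}$ and $MM^{\dag}=UU^{\dag}=P_{R(A)}$ coming from $R(M)=R(U)=R(A)$ and $N(M)=N(U)=N(A)$; then $U^{\dag}NM^{\dag}\geq 0$ gives $B^{\dag}\geq M^{\dag}$. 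This algebraic rearrangement is the part I expect to demand the most care, since it depends entirely on the common range and null-space of $M$, $U$ and $A$ together with the defining identities of the Moore-Penrose inverse.

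With $A\geq 0$ and $A$ semi-monotone, I would finish by applying Theorem \ref{tcomp1} twice to the proper weak regular splitting $A=B-C$: comparing it with the proper regular splitting $A=U-V$ via $B^{\dag}\geq U^{\dag}$ yields $\rho(H)=\rho(B^{\dag}C)\leq \rho(U^{\dag}V)<1$, and comparing it with $A=M-N$ via $B^{\dag}\geq M^{\dag}$ yields $\rho(H)\leq \rho(M^{\dag}N)<1$. Taking the smaller of the two upper bounds gives $\rho(H)\leq \min\{\rho(U^{\dag}V),\rho(M^{\dag}N)\}<1$, which is exactly the assertion.
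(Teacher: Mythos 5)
Your proposal is correct and follows essentially the same route as the paper's own proof: invoke Theorem \ref{Main23} to obtain the induced proper weak regular splitting $A=B-C$ with $B^{\dag}C=H$, derive the two dominations $B^{\dag}=U^{\dag}+U^{\dag}VM^{\dag}\geq U^{\dag}$ and $B^{\dag}=M^{\dag}+U^{\dag}NM^{\dag}\geq M^{\dag}$ from the shared range/null-space projections, and apply Theorem \ref{tcomp1} twice. The only cosmetic difference is that you cite Theorem \ref{Main13} explicitly for $\rho(H)<1$, which the paper leaves implicit inside Theorem \ref{Main23}.
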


\begin{proof}
Let $H$ be the iteration matrix corresponding to the induced
splitting $A=B-C$. Then, by Theorem \ref{Main23}, $A=B-C$ is a
proper weak regular splitting. Using the conditions
$U^{\dag}VM^{\dag}\geq 0$ and $U^{\dag}NM^{\dag}\geq 0$, we have
$$B^{\dag}=U^{\dag}(M+U-A)M^{\dag}=U^{\dag}MM^{\dag}+U^{\dag}VM^{\dag}=
U^{\dag}+U^{\dag}VM^{\dag}\geq U^{\dag}$$ and
$$B^{\dag}=U^{\dag}(M+U-A)M^{\dag}=M^{\dag}+U^{\dag}NM^{\dag} \geq M^{\dag}.$$ Now, by Theorem
\ref{tcomp1} to the splittings $A=B-C$ and $A=U-V$, we have $$ \rho
(H)\leq \rho(U^{\dag}V)<1 .$$ Again, applying the same theorem to
the splittings $A=B-C$ and $A=M-N$, we obtain
$$ \rho (H)\leq \rho(M^{\dag}N) <1.$$
Hence $\rho (H)\leq \mbox{min} \{\rho(U^{\dag}V), \rho(M^{\dag}N)\}
< 1$.
\end{proof}

In words, Theorem  \ref{Main33} says that spectral radius of product
of iteration matrices $U^{\dag}V$ and $M^{\dag}N$ cannot exceed the
spectral radius of either factor under the assumption of some
conditions. The converse of Theorem \ref{Main33} does not hold. This
is illustrated by the following example.

\begin{ex}
Let $A= \begin{bmatrix}
                                                 \begin{array}{ccc}
                                                        1 & -2 & 3\\
                                                         2 & 3 & 4\\
                                                        \end{array}
                                                        \\
                                                        \end{bmatrix}$, $M=\begin{bmatrix}
                                                 \begin{array}{ccc}
                                                        1 & -2 & 3\\
                                                         -4 & -6  & -8\\
                                                        \end{array}
                                                        \\
                                                        \end{bmatrix}$
                                                        and $U=\begin{bmatrix}
                                                 \begin{array}{ccc}
                                                        3 & -6 & 9\\
                                                         5 & 15/2  & 10\\
                                                        \end{array}
                                                        \\
                                                        \end{bmatrix}$.
                                                        Then
                                                        $A=M-N=U-V$
                                                        are two
                                                        proper
                                                        splittings with $\rho (H)=\rho(U^{\dag}V
M^{\dag}N)=9/10<1$. But $A=M-N=U-V$ are not proper regular
splittings as $N=\begin{bmatrix}
                                                 \begin{array}{ccc}
                                                        0 & 0 & 0\\
                                                         -6 & -9  & -12\\
                                                        \end{array}
                                                        \\
                                                        \end{bmatrix}\ngeq 0$  and $V=\begin{bmatrix}
                                                 \begin{array}{ccc}
                                                        2 & -4 & 6\\
                                                         3 & 9/2  & 6\\
                                                        \end{array}
                                                        \\
                                                        \end{bmatrix}\ngeq 0$. Also $A\ngeq 0$.
\end{ex}

We next produce an example which states that the condition proper
regular splitting cannot be dropped.

\begin{ex}
Let $A= \begin{bmatrix}
                                                 \begin{array}{ccc}
                                                        1 & 0 & 0\\
                                                         0 & 0 & 0\\
                                                        \end{array}
                                                        \\
                                                        \end{bmatrix}$, $M=\begin{bmatrix}
                                                 \begin{array}{ccc}
                                                        2 & 0 & 0\\
                                                         0 & 0 & 0\\
                                                        \end{array}
                                                        \\
                                                        \end{bmatrix}$
                                                        and $U=\begin{bmatrix}
                                                 \begin{array}{ccc}
                                                        -1 & 0 & 0\\
                                                         0 & 0 & 0\\
                                                        \end{array}
                                                        \\
                                                        \end{bmatrix}$.
                                                        Then
                                                        $A=M-N=U-V$
                                                        are two
                                                           splittings with $\rho(H)=\rho(U^{\dag}V
M^{\dag}N)=1$. But $A=U-V$ is not a proper regular splitting as
$V=\begin{bmatrix}
                                                 \begin{array}{ccc}
                                                        -2 & 0 & 0\\
                                                         0 & 0  & 0\\
                                                        \end{array}
                                                        \\
                                                        \end{bmatrix}\ngeq
                                                        0$ . Then $\rho (H)=1\nleq \mbox{min}
                                                         \{\rho(U^{\dag}V)=2,
\rho(M^{\dag}N)=1/2\} \nless 1.$
\end{ex}

However, we have a few examples  which show that the Theorem
\ref{Main33} is also true even if  $A\ngeq 0$. One such example is
provided below.

\begin{ex}
Let $A= \begin{bmatrix}
                                                 \begin{array}{ccc}
                                                        2 & -1 & 0\\
                                                         -1 & 2 & 0\\
                                                        \end{array}
                                                        \\
                                                        \end{bmatrix}$, $M=\begin{bmatrix}
                                                 \begin{array}{ccc}
                                                        2 & -1 & 0\\
                                                         -1 & 3 & 0\\
                                                        \end{array}
                                                        \\
                                                        \end{bmatrix}$
                                                        and $U=\begin{bmatrix}
                                                 \begin{array}{ccc}
                                                        3 & -1 & 0\\
                                                         -1 & 3 & 0\\
                                                        \end{array}
                                                        \\
                                                        \end{bmatrix}$.
                                                        Then
                                                        $A=M-N=U-V$
                                                        are two
                                                        proper
                                                        regular
                                                           splittings with $\rho(H)=\rho(U^{\dag}V
M^{\dag}N)=7/40=0.175  \leq \mbox{min}
                                                         \{\rho(U^{\dag}V)=1/2=0.5,
\rho(M^{\dag}N)=2/5=0.4\}<1.$
\end{ex}

Note that Theorem \ref{Main33} also holds for $A=U-V$ is a proper
weak regular splitting. This suggests the following question.

\begin{center} {\it Can we drop the condition  $A\geq 0$ from Theorem
\ref{Main33} ?}
\end{center}

The answer is partially affirmative if we use of Theorem
\ref{tcomp2} in stead of Theorem \ref{tcomp1}. The same result is
stated below.

\begin{thm}\label{Main333}
Let  $A=M-N=U-V$ be two  proper regular splittings  of a
semi-monotone matrix $A$ such that $R(M+U-A)=R(A)$ and
$N(M+U-A)=N(A)$. Suppose that row sums of $U^{\dag}$ and $M^{\dag}$
are positive. Then $\rho (H)\leq \mbox{min} \{\rho(U^{\dag}V),
\rho(M^{\dag}N)\} < 1$, where $H=U^{\dag}V M^{\dag}N$.
\end{thm}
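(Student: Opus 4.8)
The plan is to mirror the proof of Theorem \ref{Main33} step for step in its opening moves and to alter only the final comparison invocation. First I would observe that every proper regular splitting is in particular a proper weak regular splitting (since $U^{\dag}\geq 0$ and $V\geq 0$ force $U^{\dag}V\geq 0$), so the hypotheses of Theorem \ref{Main23} are met. Together with $R(M+U-A)=R(A)$ and $N(M+U-A)=N(A)$, this guarantees that the iteration matrix $H=U^{\dag}V M^{\dag}N$ induces a unique proper weak regular splitting $A=B-C$ with $B=M(M+U-A)^{\dag}U$ and $B^{\dag}C=H$.

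Next I would reproduce the two domination inequalities already derived inside the proof of Theorem \ref{Main33}, which never use $A\geq 0$. Expanding $B^{\dag}=U^{\dag}(M+U-A)M^{\dag}$ in the two natural ways gives
$$B^{\dag}=U^{\dag}+U^{\dag}VM^{\dag}\geq U^{\dag} \qquad\text{and}\qquad B^{\dag}=M^{\dag}+U^{\dag}NM^{\dag}\geq M^{\dag},$$
where nonnegativity of the correction terms $U^{\dag}VM^{\dag}$ and $U^{\dag}NM^{\dag}$ follows because $A=U-V$ and $A=M-N$ are proper regular (so $U^{\dag},M^{\dag},V,N\geq 0$). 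These are precisely the hypotheses $B^{\dag}\geq U^{\dag}$ and $B^{\dag}\geq M^{\dag}$ required to feed Theorem \ref{tcomp2}.

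The decisive change is the last step: instead of Theorem \ref{tcomp1}, which requires $A\geq 0$, I would invoke Theorem \ref{tcomp2}, whose hypothesis on the proper regular splitting is positivity of the row sums of its inverse factor. Applying it to the proper weak regular splitting $A=B-C$ together with the proper regular splitting $A=U-V$—using $B^{\dag}\geq U^{\dag}$ and the assumed positivity of the row sums of $U^{\dag}$—yields $\rho(H)=\rho(B^{\dag}C)\leq\rho(U^{\dag}V)<1$. Applying it a second time to $A=B-C$ and $A=M-N$ (so that $M,N$ play the roles of $U,V$ in Theorem \ref{tcomp2}), using $B^{\dag}\geq M^{\dag}$ and the positivity of the row sums of $M^{\dag}$, yields $\rho(H)\leq\rho(M^{\dag}N)<1$. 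Taking the minimum delivers the claim.

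The only genuinely delicate point—and the reason both $U^{\dag}$ and $M^{\dag}$ must appear in the hypothesis—is the bookkeeping about which factor plays the role of the proper regular splitting in each invocation of Theorem \ref{tcomp2}: the row-sum condition is attached to that factor, so the comparison against $U-V$ consumes the assumption on $U^{\dag}$ while the comparison against $M-N$ consumes the assumption on $M^{\dag}$. Since the identities for $B^{\dag}$ are already in place from Theorem \ref{Main33} and are independent of the sign of $A$, no fresh computation is needed; the entire content of the argument is the substitution of Theorem \ref{tcomp2} for Theorem \ref{tcomp1}.
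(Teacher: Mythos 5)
Your proof is correct and is precisely the argument the paper intends: the paper states Theorem \ref{Main333} without a written proof, remarking only that one should repeat the proof of Theorem \ref{Main33} with Theorem \ref{tcomp2} substituted for Theorem \ref{tcomp1}, which is exactly what you carry out. Your verification that the inequalities $B^{\dag}\geq U^{\dag}$ and $B^{\dag}\geq M^{\dag}$ do not depend on $A\geq 0$, and your bookkeeping of which row-sum hypothesis feeds which invocation of Theorem \ref{tcomp2}, correctly fill in the details the paper leaves implicit.
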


Finally we conclude this section with a problem which appears to be
open:

\begin{center}
 {\it Can we drop the conditions ``row sums of $U^{\dag}$
and $M^{\dag}$ are positive'' from Theorem \ref{Main333} ?}
\end{center}

\section{Conclusions}
The notion of the alternating iterative method for singular and
 rectangular linear systems is introduced. The present work extends the work
 of Benzi and Szyld \cite{benz} to rectangular(square singular) case.
The following three main results
 are obtained among others.

 \begin{itemize}

 \item Sufficient conditions for the convergence of alternating
 iteration scheme is provided (Theorem \ref{Main13}). This coincides
 with the first objective of Theorem 3.2, \cite{benz}  in case of nonsingular
 matrices.

 \item The induced splitting is shown to be a proper weak regular splitting under a few assumptions.
 This result not only partially fulfils the 2nd objective of Theorem 3.2, \cite{benz} in
 rectangular matrix setting but also extends Theorem 3.4, \cite{benz}.

\item Theorem
\ref{Main33} describes that the induced splitting is a better choice
among the individual splittings which  generalizes Theorem 4.1,
\cite{benz} for non-negative $A$.

\end{itemize}

The numerical benchmark of the alternating iterative method
indicates that the rate of convergence of the proposed  alternating
iterative method  is  not higher than the rate of convergence of the
usual iterative method. A problem for future study is also proposed
in the last part of Section 4. Not only that if we consider
$$X^{i+1}=U^{\dag}V M^{\dag}N X^{i}+U^{\dag}(VM^{\dag}+I),
~~~i=0,1,2,\cdots,$$ then this scheme will converge to the
Moore-Penrose inverse of $A$.

In case of a real square singular matrix,  let $m$ be the degree of
the minimal polynomial for $A$. If $b\in R(A^k)$, then the linear
system $Ax = b$ has a unique Krylov solution $x = A^Db \in
K_{m-k}(A, b)$, where $k$ is the index of $A$. Scope exists to
extend this work to compute $A^Db$, and the Drazin inverse of $A$
 as computing Drazin inverse of a matrix
is still a challenging problem.

\vspace{1cm}

\noindent {\small {\bf Acknowledgments.} The  author acknowledges
the support provided by Science and Engineering Research Board,
Department of Science and Technology, New Delhi, India, under the
grant number YSS/2015/000303.}

\bibliographystyle{amsplain}

\end{document}